\newcommand{\C}{\mathbb C}
\newcommand{\Z}{\mathbb Z}
\newcommand{\N}{\mathbb N}
\newcommand{\Q}{\mathbb Q}
\newcommand{\sma}{\left(\begin{array}}
\newcommand{\fma}{\end{array}\right)}
\newtheorem{lem}{Lemma}[section]
\newtheorem{co}[lem]{Corollary}
\newtheorem{thm}[lem]{Theorem}
\newtheorem{prop}[lem]{Proposition}
\newenvironment{proof}{\textbf{Proof.}}{\newline\hspace*{\fill}{$\Box$}\\}
\begin{document}
\title{Tubular free by cyclic groups and the strongest Tits alternative}
\author{J.\,O.\,Button}

\newcommand{\Address}{{
  \bigskip
  \footnotesize

\textsc{Selwyn College, University of Cambridge,
Cambridge CB3 9DQ, UK}\par\nopagebreak
  \textit{E-mail address}: \texttt{j.o.button@dpmms.cam.ac.uk}
}}

\maketitle
\begin{abstract}
We show, using Wise's equitable sets criterion, that every tubular free by
cyclic group acts freely on a CAT(0) cube complex. We also show that these
groups have a finite index subgroup satisfying the strongest Tits 
alternative, which means that every subgroup either surjects a non abelian
free group or is torsion free abelian. In particular the Gersten group is
the first known group virtually having this property but which is not
virtually special nor virtually residually free.
\end{abstract}

\section{Introduction}
When dealing with finitely presented groups, it may well be the case that
knowledge of how they act geometrically allows us to prove purely group
theoretic facts. An extreme example of this occurs with a word hyperbolic 
group, because if it acts properly and cocompactly on a CAT(0) cube
complex (which will therefore be finite dimensional) we can conclude by the
Agol - Wise results that it has a finite index subgroup which embeds in a
right angled Artin group (a RAAG). In this case we say our group is virtually
special and then any property that holds for all subgroups of RAAGs will
hold virtually for our hyperbolic group. Two useful examples of this are
linearity and (if non abelian) largeness. In fact very strong versions of
both hold in this case; first we obtain linearity not just over $\C$ but
over $\Z$. As for largeness, it was shown in \cite{yagash} that RAAGs have
the property that any subgroup (even if not finitely generated) either
surjects the free group $F_2$ or is torsion free abelian, which they named
the strongest Tits alternative. Thus our hyperbolic group possessing such
an action will have a finite index subgroup satisfying the strongest Tits 
alternative. However hyperbolicity is certainly needed here to go from this
geometric action to these group theoretic properties. If we remove this 
condition then the Burger - Mozes groups act properly and cocompactly
on a 2 dimensional CAT(0) cube complex, but they can be simple groups
which will fail nearly all of the usual group theoretic properties of
interest.

In this paper we will consider two families of groups which all contain
$\Z^2$ and therefore cannot be hyperbolic but which are generally
considered to be well behaved. The first is free by cyclic groups, or
more accurately (finite rank free) by $\Z$ groups. It was proved in
\cite{wshg2} that if a free by cyclic group is word hyperbolic then it
does act properly and cocompactly on a CAT(0) cube complex and so has
the above strong group theoretic properties. Now it is known that if a
free by cyclic group is not word hyperbolic then it contains $\Z^2$, so
we can ask: does such a group always have a ``nice'' geometric action on
a CAT(0) cube complex? Of course it depends on what is meant by nice
but in \cite{ger} Gersten displayed a free by cyclic group that cannot
act properly and cocompactly on any CAT(0) space. Moreover this group
is not virtually special but we can still ask whether the strong properties
of virtually special groups also hold for the Gersten group, even though
we will not be able to establish them by finding such geometric actions.
Here we have nothing to say about linearity but we do show that the Gersten
group virtually satisfies the strongest Tits alternative (though virtually
is needed here).

The other class of groups we consider in this paper are what have been
called the tubular groups: namely the fundamental group of a finite graph
of groups with all vertex groups isomorphic to $\Z^2$ and all edge groups
isomorphic to $\Z$. These have been considered from both a geometric
and group theoretic point of view. In \cite{bribr} they were shown to
have interesting Dehn functions and their quasi-isometric classes were
considered in \cite{csh}. Now Wise showed in \cite{wsja} that they  
can fail to be Hopfian groups, so certainly they can be badly behaved
group theoretically as they need not be linear or even residually finite.
As for the existence of nice geometric actions, the Gersten group is
in fact also a tubular group, so proper and cocompact actions on a CAT(0)
cube complex will not always exist. However in \cite{wscb} Wise looked
at the question of when a tubular group has a free action on a CAT(0)
cube complex. As all our groups are torsion free and because
proper here means topologically proper (a compact set can only have 
finitely many elements which translate it to an image that intersects this
set), acting properly and acting freely mean the same thing in this context.
In \cite{wscb} Corollary 5.10 the tubular groups acting freely and
cocompactly on a CAT(0) cube complex are classified, though these are
quite restrictive. However, on removing the compactness hypothesis,
the main result in Theorem 1.1 of that paper gives a condition (in terms of 
what are called equitable subsets of the vertex groups) 
that determines exactly when a
tubular group acts freely on a CAT(0) cube complex (though this complex
might not be finite dimensional, nor locally finite). From this he was
able to show that a wider range of tubular groups have a free action
on a CAT(0) cube complex, in particular his non Hopfian example and
the Gersten group, but he also used this condition to
give examples of tubular groups with no such actions.
Afterwards in \cite{wd1} a criterion for when a
tubular group acts freely on a finite dimensional CAT(0) space was
developed, and in the forthcoming paper \cite{wd2} it is shown that this
is equivalent to the group being virtually special.

Here our purpose is to examine the intersection of these two classes of
groups, namely the tubular groups $G(\Gamma)$
which are also free by cyclic. We
first identify in Theorem \ref{fbyc} exactly which tubular groups are
free by cyclic, which are those groups having a homomorphism to $\Z$ that
is non zero on all edge groups. This always holds if the underlying graph
$\Gamma$ is a tree. In \cite{wscb}, Conjecture 1.8 states
that every free by cyclic group acts freely on a CAT(0) cube complex.
Using our homomorphism we can show in Theorem \ref{free} that any
tubular free by cyclic group satisfies Wise's equitable subsets condition
and therefore does have a free action on a CAT(0) cube complex.

However, as mentioned above, this result will not allow us to
establish group theoretic properties for tubular free by cyclic groups such
as the Gersten group. Thus in Section 3 we look at whether tubular free
by cyclic groups satisfy the strongest Tits alternative. Although they
need not in general, they are not far from doing so in that they all
have a finite index subgroup that does. In fact we consider tubular groups
where all edge groups embed as maximal cyclic subgroups of the vertex
groups, as it is straightforward to establish in Proposition \ref{fincv}
that any free by cyclic tubular group has a finite cover of this form.
We then show in Theorem \ref{main} that all tubular groups with
maximal edge inclusions do virtually satisfy the strongest Tits alternative,
though virtually is needed here. The method of proof is to look purely
at the graph of groups and the associated action on the Bass - Serre tree,
resulting in a parity argument that is established at the end of the
proof, once the appropriate finite index subgroup is revealed. 

In Section 4 we take a quick look at some other group theoretic properties
which are implied if the group is virtually special, but which also
hold for all or many tubular groups. Largeness is quickly established
for all tubular groups, regardless of the edge embeddings, in 
Proposition \ref{lrg} and virtual biorderability for all free by cyclic
tubular groups in Theorem \ref{vbi}, using the Perron - Rolfsen criterion
in \cite{perrlf03}. The other property considered here is that of being
residually free, whereupon it is pointed out at the end of this Section that
very few tubular groups are residually free or even virtually residually
free. The reason for mentioning this negative result is that a group which
is virtually residually free will of course virtually satisfy the strongest
Tits alternative. In particular we have shown that the Gersten group is
the first group known virtually to satisfy the strongest Tits alternative
which is not either virtually special or virtually residually free.

We finish in Section 5 with a range of examples of tubular groups that have
already appeared in the literature, with comments on which group theoretic
properties are known to hold for them. The first three are all free by
cyclic groups and it will be seen that a whole host of strong group
theoretic properties hold for all tubular free by cyclic groups (though
not subgroup separability, whereas linearity is still in question).
Meanwhile the other three examples seem to behave in a less predictable
way. A straightforward method to create tubular groups with 
bad properties is to ensure the existence of non Euclidean Baumslag - Solitar
subgroups (namely groups with presentation $\langle a,t|ta^mt^{-1}=a^n\rangle$
where $|m|\neq |n|$). However these last three examples do not contain such
subgroups, and in particular it is not the case that every tubular group
either contains a non Euclidean Baumslag - Solitar group (``badly behaved'')
or has a finite index subgroup which is free by cyclic (``well behaved''),
but there are also ``strangely behaved'' tubular groups too.

\section{Tubular free by cyclic groups acting freely}

Given a finite graph of groups $G(\Gamma)$
with all vertex groups isomorphic
to $\Z^2$ and all edge groups isomorphic to $\Z$, 
we can produce a presentation for the resulting 
fundamental group $G$, which will be referred to as a tubular group,
in the usual way. 
We first pick a maximal tree in $\Gamma$ and contract each edge by forming
an amalgamated free product. As $\Z^2$ has an obvious 2-generator
1-relator presentation and we need to add 1 relator each time
when performing the amalgamation, 
this process creates a presentation with $2v$ generators and $2v-1$
relations if there are $v$ vertices. Having done this, we then
introduce a stable letter for each of the $b$ edges left ($b$ being
the first Betti number of the graph $\Gamma$) and form HNN extensions
identifying the remaining cyclic subgroups, thus resulting in a
presentation for $G$ which has $2v+b$ generators and
$2v+b-1$ relators. In particular $G$ has a presentation of
deficiency 1, that is where the number of generators is 1 more than
the number of relators.

Moreover any finite presentation for $G$ has deficiency at
most 1, which can be seen because the standard presentation 2-complex
forms an aspherical graph of spaces as in \cite{scwl} Section 3.
Thus $G$ is also of cohomological and geometric dimension 2 and its
Euler characteristic will equal 0, where 1 minus the Euler characteristic
is an upper bound for the deficiency of $G$. Also $G$ is well known to
be coherent, namely every finitely generated subgroup of $G$ is finitely
presented.

Another well
behaved class of groups sharing these nice properties are the free by
$\Z$ groups, which here refer to finitely presented groups of the
form $F\rtimes_\alpha\Z$ where $F$ is free of finite rank. Therefore
it is of interest to determine when a tubular group is actually
free by $\Z$, with the next theorem giving us a complete answer.

\begin{thm} \label{fbyc}
A tubular group 
$G$ is isomorphic to a free by $\Z$ group if and only
if there exists a homomorphism from $G$ to $\Z$ which is
non zero on every edge group.
\end{thm}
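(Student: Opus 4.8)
The plan is to prove both implications using the action of $G$ on the Bass--Serre tree $T$ of its graph of groups decomposition, together with the single structural fact that a free group contains no copy of $\Z^2$.

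For the easier direction, suppose $G\cong F\rtimes_\alpha\Z$ with $F$ free of finite rank, and let $\phi\colon G\to\Z$ be the projection with kernel $F$. I would argue by contradiction: if some edge group $G_e=\langle z\rangle$ satisfied $\phi(z)=0$ then $z\in F$. In $T$ the edge fixed by $G_e$ has two endpoints whose stabilisers $A,B$ are the incident vertex groups, each isomorphic to $\Z^2$, with $A\cap B=\langle z\rangle$ and $\langle A,B\rangle=A*_{\langle z\rangle}B$; crucially $z$ is central in this amalgam. Choosing $y\in A$ and $x\in B$ outside $\langle z\rangle$, the normal form shows $[x,y]\neq 1$, and since $\phi$ kills commutators we have $[x,y]\in F$, while $z$ commutes with $[x,y]$. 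A length argument in the amalgam (powers of the cyclically reduced length $4$ element $[x,y]$ cannot meet the length $\le 1$ elements $z^a$) shows $z$ and $[x,y]$ generate a copy of $\Z^2$, which therefore sits inside the free group $F$ --- a contradiction. Hence $\phi$ is nonzero on every edge group. This argument is uniform: because distinct endpoints of an edge of $T$ always yield an amalgam of vertex stabilisers over the edge stabiliser, it covers both the amalgamated and the HNN (loop) edges of $\Gamma$ at once.

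For the converse, suppose $\phi\colon G\to\Z$ is nonzero on every edge group; after rescaling we may take $\phi$ onto, so $F:=\ker\phi$ gives $G/F\cong\Z$ and, as $\Z$ is free, $G\cong F\rtimes\Z$. To see that $F$ is free I would examine the induced $F$-action on $T$. Since $\phi$ is conjugation invariant, an edge stabiliser in $F$ is $g(\ker\phi|_{G_e})g^{-1}$, which is trivial because $\phi$ is injective on each $G_e\cong\Z$; a vertex stabiliser is $g(\ker\phi|_{G_v})g^{-1}\cong\Z$, using that every vertex of the connected graph $\Gamma$ meets an edge, so $\phi|_{G_v}\neq 0$ and its kernel has rank one. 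By Bass--Serre theory $F$ is then the fundamental group of a graph of groups with infinite cyclic vertex groups and trivial edge groups, hence a free product of infinite cyclic groups and a free group, i.e.\ free.

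Finally I would show this $F$ has finite rank by counting orbits. As $F$ is normal with $G/F\cong\Z$, the $F$-orbits of edges lying over a fixed edge $\bar e$ of $\Gamma$ biject with the double cosets $F\backslash G/G_e=\Z/\phi(G_e)$, a finite set since $\phi(G_e)\neq 0$; likewise for vertices. With $\Gamma$ finite this makes $T/F$ a finite graph, so $F$ has finite rank (one more than the number of edges of $T/F$). The degenerate case where $\Gamma$ is a single vertex with no edges, giving $G\cong\Z^2=\Z\rtimes\Z$, is immediate and consistent. I expect the main obstacle to be this converse, specifically recognising that the edge hypothesis forces the $F$-action on $T$ to have \emph{both} trivial edge stabilisers \emph{and} finitely many orbits, which together deliver freeness of finite rank; pinning the vertex stabilisers down as exactly $\Z$, never $\Z^2$ (which would already break freeness), is where the hypothesis does its real work.
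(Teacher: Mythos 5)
Your proposal is correct, but it reaches the theorem by a genuinely different route in both directions. For the implication ``$\chi$ nonzero on all edge groups $\Rightarrow$ free by $\Z$'', the paper argues by induction over a spanning tree, using Ratcliffe's criterion to show $\ker(\chi|_{A*_CB})$ is finitely generated and Serre's subgroup theorem to show it is free, and then deals with the remaining HNN edges by an explicit rewriting of the presentation (a presentation it reuses later, e.g.\ in the proof of Theorem \ref{vbi}). You instead let $F=\ker\chi$ act on the Bass--Serre tree and note that the hypothesis forces all edge stabilisers of $F$ to be trivial and all vertex stabilisers to be infinite cyclic, with finitely many orbits by the double coset count $F\backslash G/G_e\cong\Z/\chi(G_e)$; this yields freeness and finite rank simultaneously and treats amalgamated and HNN edges uniformly, at the price of losing the explicit presentation. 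For the converse the paper quotes the Bridson--Howie result on finitely generated normal subgroups of groups splitting over $C$, whereas you exhibit a concrete $\Z^2=\langle z,[x,y]\rangle$ inside the free kernel whenever an edge generator $z$ is killed, using that $z$ is central in $G_v*_{\langle z\rangle}G_w$ together with a normal form length argument to see that $\langle z\rangle\cap\langle[x,y]\rangle$ is trivial; this is more elementary and self-contained (the one fact you should cite or prove is that adjacent vertex stabilisers in the tree generate their amalgam over the edge stabiliser, which is standard ping-pong and valid even when the edge of $\Gamma$ is a loop). Both arguments are complete; yours avoids the two external propositions the paper relies on, while the paper's version produces data that it needs again in Section 4.
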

\begin{proof}
We use the following two results:
\begin{prop} \label{rat}
(i) (\cite{rat} Corollary 1)
Let $G$ be a group with an amalgamated product decomposition $G=A\ast_CB$, 
and  let $\phi: G \to \Z$ be a homomorphism such that ${\rm ker}
(\phi |_C)$ is finitely generated 
and not equal to $C$. 
Then ${\rm ker}(\phi)$ is finitely generated if and only if 
${\rm ker}(\phi |_A)$ and ${\rm ker}(\phi |_B)$ 
are finitely generated.\\ 
(ii) (\cite{brhw} Proposition 2.2)
If $G$ splits over the edge group $C$ 
and $N$ is finitely generated and normal in $G$ then
either $N$ is in $C$ or else $CN$ has finite index in $G$.
\end{prop}
First suppose that we have such a homomorphism $\chi$. We begin with
the case where our graph $\Gamma$ is a tree and work by induction on 
the number of vertices by taking an amalgamated free product at each stage.
Thus let us say that we have the subgroup $A$ of $G$ formed by
amalgamation of the vertices in a subtree of $\Gamma$. Our inductive 
hypothesis is that ${\rm ker}(\chi |_A)$ is finitely generated and free,
whereupon on making the next amalgamation we are forming
$H=A\ast_CB$ with $B\cong\Z^2$ and $C\cong\Z$. Now $\chi$ is not zero
on the edge group $C$, so that  
${\rm ker}(\chi |_C)$ is certainly finitely generated 
and not equal to $C$ as it is trivial. Moreover 
${\rm ker}(\chi |_A)$ and ${\rm ker}(\chi |_B)$ 
are finitely generated, so Proposition \ref{rat} (i) tells us that
${\rm ker}(\chi |_H)$ is too.
To see that it is free, we have by \cite{tre} Section I.5.5
Theorem 14 that a subgroup
$S$ of $H=A\ast_CB$ that misses all conjugates of $C$ is a free product
of a free group with factors of the form $S\cap hAh^{-1}$,
$S\cap hBh^{-1}$ for various elements $h\in H$. But on setting 
$S={\rm ker}(\chi |_H)$, we have that $S\cap hAh^{-1}$ is equal
to $h{\rm ker}(\chi |_A)h^{-1}$, which is free by hypothesis. As for
$S\cap hBh^{-1}\cong{\rm ker}(\chi |_B)\leq B\cong\Z^2$, this is trivial
or isomorphic to $\Z$ because $C$ is not in the kernel. 

If now $\Gamma$ is not a tree then we can still perform the above
process on a spanning tree. On contracting this tree we are left
with a single vertex where we have a subgroup $H$
of $G$ which is free by $\Z$ 
because ker $(\chi |_H)$ is
a free group of finite rank $r$ say,
along with some self loops at this vertex corresponding to HNN
extensions. Let $t$ be an element of $H$ generating the image
 of $\chi$ so that we can express $H$ as
\[K\rtimes_\alpha\langle t\rangle=
\langle k_1,\ldots,k_r,t|tk_1t^{-1}=\alpha(k_1),\ldots ,tk_rt^{-1}=
\alpha(k_r)\rangle\]
for the appropriate automorphism $\alpha$ of 
ker $(\chi |_H)$ which we now call $K$. Now when we take a loop
and form the HNN extension $E$
of $H$ with stable letter $s$ conjugating
$x\in H$ to $y\in H$, we can write $x=ut^m$ and $y=vt^n$ for $u,v\in K$.
But $sxs^{-1}=y$ implies that $m=n$ on application of $\chi$, which is
defined on $E$ too. Moreover
$n\neq 0$ as $x$ generates an edge group. Thus we have
\[E=\langle s,k_1,\ldots,k_r,t|tk_1t^{-1}=\alpha(k_1),\ldots ,tk_rt^{-1}=
\alpha(k_r),sut^ns^{-1}=vt^n\rangle\]
which, on rearranging the last relation and introducing new generators
$s_1=s$, as well as $s_i=ts_{i-1}t^{-1}$ for $2\leq i\leq n$ when $n\geq 2$,
admits the following presentation:
\begin{eqnarray} \label{eq1} E=\langle 
k_1,\ldots,k_r,s_1,\ldots ,s_n,t
&|&tk_1t^{-1}=\alpha(k_1),\ldots ,tk_rt^{-1}=\alpha(k_r),\\
&&ts_1t^{-1}=s_2,\ldots ,
ts_{n}t^{-1}=v^{-1}s_1u\rangle   \nonumber
\end{eqnarray}
whereupon we see that $E$ is also free by $\Z$. We can now remove the
self loops one at a time to conclude that $G$ is also
free by $\Z$.

Finally suppose that we have any homomorphism $\chi$ from a given
tubular group $G(\Gamma)$ to $\Z$ with kernel $K$, along with an
edge group $E$ with $\chi(E)=0$. We can form $G$ by leaving
this edge until last, whereupon either $G=A*_EB$ or
$G=H*_E$. Now Proposition \ref{rat} (ii) implies that if
$K$ is finitely generated then, as $E\subseteq K$ so $EK$ has infinite
index in $G$, we have $E=K\cong\Z$ which implies
that $G=\Z^2$ and $\Gamma$ is a single vertex.
We now let $\chi$ vary over all homomorphisms from $G$ to
$\Z$.
\end{proof}
Note: more recently it was shown in \cite{cl} how to calculate the 
BNS invariant of a graph of groups, thus telling  us which homomorphisms
have finitely generated kernel. 
\begin{co} \label{tree}
If $\Gamma$ is a tree then $G(\Gamma)$ is free by $\Z$.
\end{co}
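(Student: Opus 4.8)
The plan is to invoke Theorem \ref{fbyc}: since a tubular group is free by $\Z$ precisely when it admits a homomorphism to $\Z$ that is non zero on every edge group, it suffices to construct such a homomorphism $\chi$ when $\Gamma$ is a tree. As $\Gamma$ has first Betti number $0$ there are no stable letters, so $G(\Gamma)$ is an iterated amalgamated product along the tree, and a homomorphism $G(\Gamma)\to\Z$ is exactly a family of homomorphisms $\chi_v\colon\Z^2\to\Z$, one on each vertex group, which agree on every edge group; that is, $\chi_v(c_{e,v})=\chi_w(c_{e,w})$ whenever the edge $e$ joins $v$ to $w$ and $c_{e,v},c_{e,w}$ denote the images of a fixed generator of the edge group in the two vertex groups. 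The requirement is then simply that this common value $m_e$ be non zero for every edge $e$.

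The first step is to carry out the construction over $\Q$ rather than $\Z$, which removes all divisibility constraints. I order the vertices $v_1,\ldots,v_n$ so that each $v_i$ with $i\geq 2$ is joined to exactly one earlier vertex by a single edge; such an order exists because $\Gamma$ is a tree. I then choose $\Q$-valued homomorphisms $\chi_{v_i}\in\mathrm{Hom}(\Z^2,\Q)$ inductively. At the root I pick any $\chi_{v_1}$ not vanishing on the finitely many edge generators at $v_1$, which is possible since these vanishing conditions cut out finitely many lines in $\mathrm{Hom}(\Z^2,\Q)\cong\Q^2$. At the step for $v_i$ the value $m_0$ on the unique edge back to the already processed part is fixed and non zero, so $\chi_{v_i}$ is constrained to an affine line in $\Q^2$; on this line I must avoid the finitely many points where $\chi_{v_i}$ vanishes on one of the remaining incident edge generators. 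An edge generator parallel to the back edge causes no difficulty, since there $\chi_{v_i}$ takes a non zero rational multiple of $m_0$; every other vanishing condition removes a single point, leaving infinitely many admissible rational choices.

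Finally I clear denominators: letting $M$ be a common denominator of the values of all the $\chi_{v_i}$ on a basis of each vertex group, the maps $M\chi_{v_i}$ are genuine homomorphisms $\Z^2\to\Z$, they still agree on every edge group, and they remain non zero on each edge group because $M\neq 0$. They therefore assemble into a homomorphism $\chi\colon G(\Gamma)\to\Z$ that is non zero on every edge group, and Theorem \ref{fbyc} gives that $G(\Gamma)$ is free by $\Z$. The only genuine subtlety, and the reason for passing through $\Q$, is that working directly over $\Z$ would force the value $m_e$ on an edge to be divisible by the index of each edge inclusion into its two vertex groups, and at a vertex of high valence these congruence conditions can be arranged to conflict; clearing denominators at the end sidesteps this entirely.
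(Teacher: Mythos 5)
Your proposal is correct and takes essentially the same route as the paper: an inductive construction of a $\Q$-valued homomorphism over the tree, avoiding the finitely many vanishing conditions at each new vertex, followed by clearing denominators and invoking Theorem \ref{fbyc}. The only cosmetic difference is that the paper fixes a basis $\langle x,y\rangle$ adapted to the incoming edge and varies $\chi(y)$, whereas you phrase the same freedom as choosing a point on an affine line in $\mathrm{Hom}(\Z^2,\Q)$.
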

\begin{proof}
We pick a vertex $v_0$ with which to start defining $\chi$, which
for now will be a homomorphism from $G$ to $\Q$,
and then
inductively extend to the vertices at distance $n$ from $v_0$. Say
this has been done in such a way that $\chi$ is non zero on every
edge group and take a vertex $v$ at distance $n+1$. We can let
this vertex group equal $\langle x,y\rangle$ where the edge group
into $v$ from distance $n$ is generated by a power of $x$ and thus 
we have $\chi(x)\neq 0$. We then have other generators for the edge
groups out of $v$ which will all be of the form $x^iy^j$. As this
is a finite list, we can choose a value of $\chi(y)\in\Q$ such that
no edge group in $v$ is sent to zero by $\chi$. We then extend $\chi$
to all of the vertex groups, hence to $G$, 
and this will be well defined because $\Gamma$ is
a tree.

We end up with a homomorphism $\chi$ from a finitely generated group to $\Q$,
thus the image is cyclic and we can multiply $\chi$ by a suitable
integer so that this image now lies in $\Z$.
\end{proof} 
Next we examine the necessary and sufficient condition stated in 
\cite{wscb} Theorem 1.1
for a tubular group to act freely on a CAT(0) cube complex (which
in general here need not be finite dimensional nor locally finite).
This condition is that there exists an equitable set, which is a choice 
of a finite family of elements from each vertex group such that\\
(i)
each family generates a finite index subgroup of the respective vertex
group and\\
(ii) the compatibility condition is satisfied on each edge $e$, namely
on taking a generator of this edge group which embeds as 
${\bf x}$ in
one vertex group and ${\bf y}$ in the other, we have
\[\sum_{i=1}^m \#[{\bf x},{\bf s_i}]=\sum_{j=1}^n \#[{\bf y},{\bf t_j}].\]
Here $\{{\bf s_1},\ldots ,{\bf s_m}\}$ is the chosen family for the 
first mentioned
vertex group, $\{{\bf t_1},\ldots ,{\bf t_n}\}$ is for the second 
and $\#[{\bf x},{\bf s}]$ is
the intersection number. If we have 
${\bf x}=(x_1,x_2)$ and ${\bf s}=(s_1,s_2)$
for some choice of basis of the first vertex group then 
$\#[{\bf x},{\bf s}]$ is equal
to the modulus of the determinant $x_1s_2-x_2s_1$ of the matrix
$\sma{cc}x_1&s_1\\x_2&s_2\fma$, so that we will write
$|({\bf x}\,\,{\bf s})|$ for $\#[{\bf x},{\bf s}]$.  

It was shown in \cite{wscb} Example 1.4
that the Gersten free by cyclic tubular
group (with graph a single vertex and a bouquet of two circles) satisfies
the above condition, thus although it does not act properly and cocompactly
on any CAT(0) metric space by \cite{ger}, or even properly and semisimply
by \cite{bh}, it does act 
freely on a CAT(0) cube complex.
Conjecture 1.8 of this paper says that every free by $\Z$ group possesses
such an action. Here we are able to confirm this conjecture for the
tubular free by $\Z$ groups. 
\begin{thm} \label{free}
If $G(\Gamma)$ is a tubular free by $\Z$ group then there
exists an equitable set for $G(\Gamma)$ and so $G$ acts freely on a
CAT(0) cube complex.
\end{thm}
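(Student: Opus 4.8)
The plan is to use Theorem \ref{fbyc} to obtain a homomorphism $\chi\colon G\to\Z$ that is non zero on every edge group, and then to construct an explicit equitable set out of $\chi$. The guiding observation is that $\chi$ takes the \emph{same} value on the two images ${\bf x}$ and ${\bf y}$ of any edge generator (these being either identified in the amalgamation, or conjugate via a stable letter in an HNN extension, and $\chi$ is defined on all of $G$). Hence if I can arrange each vertex contribution $\sum_i|({\bf x}\,\,{\bf s_i})|$ to depend only on $|\chi({\bf x})|$, then the compatibility equation (ii) will hold automatically on every edge, self-loops included.

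First, for each vertex group $V\cong\Z^2$ I would record the shape of $\chi|_V$: let $d_V>0$ generate $\chi(V)\leq\Z$, pick a generator ${\bf k}_V$ of the rank one lattice $\ker(\chi|_V)$, and pick ${\bf e}_V\in V$ with $\chi({\bf e}_V)=d_V$, so that $\{{\bf k}_V,{\bf e}_V\}$ is a basis of $V$. Writing an edge generator at $V$ as ${\bf x}=x_k{\bf k}_V+x_e{\bf e}_V$, the hypothesis $\chi({\bf x})\neq 0$ forces $x_e\neq 0$, with $|x_e|=|\chi({\bf x})|/d_V$, and against a single element ${\bf s}=s_k{\bf k}_V+s_e{\bf e}_V$ the intersection number in this basis is $|({\bf x}\,\,{\bf s})|=|x_ks_e-x_es_k|$.

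The key step, which overcomes the main obstacle, is the choice of family. Condition (i) forces the use of elements outside $\ker(\chi|_V)$, and for such elements $|({\bf x}\,\,{\bf s})|$ is \emph{not} proportional to $|\chi({\bf x})|$; worse, the absolute value makes it only piecewise linear in ${\bf x}$. I would sidestep this by taking the symmetric pair
\[{\bf s_1}=a_V{\bf k}_V+{\bf e}_V,\qquad {\bf s_2}=b_V{\bf k}_V-{\bf e}_V\]
for large positive integers $a_V,b_V$. This pair spans a subgroup of index $a_V+b_V$, so (i) holds. For an edge generator ${\bf x}$ the contribution is $|x_k-a_Vx_e|+|x_k+b_Vx_e|$, and since the two bracketed quantities differ by $(a_V+b_V)x_e$, the triangle inequality forces this sum to equal $(a_V+b_V)|x_e|$ exactly when the two quantities have opposite signs, i.e.\ when the slope $x_k/x_e$ lies in $(-b_V,a_V)$. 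As there are only finitely many edge generators at $V$, taking $a_V,b_V$ larger than all the finitely many values $|x_k/x_e|$ puts every one of them into this equality regime, so each contribution collapses to $(a_V+b_V)|x_e|=\tfrac{a_V+b_V}{d_V}|\chi({\bf x})|$.

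Finally I would couple the vertices by fixing one large integer $C$ and imposing $a_V+b_V=Cd_V$ at every vertex, which is possible simultaneously (there being finitely many vertices) once $C$ is large enough to keep each $a_V,b_V$ above the corresponding slope bound. Then every contribution equals $C|\chi({\bf x})|$, so on each edge both sides of (ii) equal $C$ times the common value $|\chi(\text{edge generator})|$, and compatibility holds. This yields an equitable set, whereupon Wise's criterion (\cite{wscb} Theorem 1.1) supplies the free action on a CAT(0) cube complex. The essential difficulty is precisely the non-linearity imposed by the determinant's absolute value together with the finite-index constraint; the device that dissolves it is the slope-straddling symmetric pair, chosen to trigger equality in the triangle inequality so that the surviving quantity is proportional to $\chi$.
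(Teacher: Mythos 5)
Your proof is correct, and the single--vertex computation is at heart the same device the paper uses: replacing ${\bf s_2}$ by $-{\bf s_2}$ (which changes no intersection number) puts both of your elements on the line $\chi=d_V$ with kernel--coordinates $a_V$ and $-b_V$ straddling every slope $x_k/x_e$, which is exactly the paper's ``take the minimum and the maximum'' trick for forcing equality in the triangle inequality so that the sum of absolute values collapses to something proportional to $|\chi({\bf x})|$. Where you genuinely diverge is in the passage from one vertex to a general graph. The paper first treats the case where $\Gamma$ is a bouquet of circles and then reduces to it by contracting the edges of a spanning tree one at a time, replacing each vertex group by its neighbour via a determinant--$\pm 1$, $\chi$--respecting isomorphism, working over $\Q^2$ throughout and clearing denominators at the end, and finally reversing the contractions to transport the equitable pair back to every vertex. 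You avoid all of that machinery by normalising locally: the adapted basis $\{{\bf k}_V,{\bf e}_V\}$ together with the constraint $a_V+b_V=Cd_V$ makes every vertex contribution equal to $C|\chi(\cdot)|$ on the nose, so compatibility on each edge follows at once from $\chi({\bf x})=\chi({\bf y})$ (identification in an amalgam, or conjugacy by a stable letter plus commutativity of $\Z$), with no graph contraction and no need to leave $\Z^2$. This buys a shorter, more transparent argument at the cost of nothing; the paper's contraction procedure is more flexible but does more work than is needed here. Two small points you should spell out: for $C$ large enough one can indeed split $Cd_V$ as $a_V+b_V$ with both summands exceeding the slope bound (clear since $d_V\geq 1$), and a vertex with no incident edges (where $d_V$ is undefined) can be given any basis, condition (ii) being vacuous there.
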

\begin{proof} 
We use Theorem \ref{fbyc}, so that we have $\chi:G\rightarrow\Z$
which is non zero on all the edge groups. 

Our first case is when $\Gamma$ is a bouquet of circles, so that we have
only one vertex $v$ but many self loops.
We pick a basis for our vertex group $v$ and therefore represent the elements
as ordered pairs of integers. Therefore suppose that, having directed the
$k$ edges arbitrarily, we have the following generators when the edge groups
are injected via the negative and the positive ends respectively:
\[{\bf a_1}=(a_1,b_1),\ldots ,{\bf a_k}=(a_k,b_k)\mbox{ and }
{\bf c_1}=(c_1,d_1),\ldots ,{\bf c_k}=(c_k,d_k)\]
which are all elements of $\Z^2$. We now require a choice of
$N$ elements 
${\bf x_1},\ldots ,{\bf x_N}\in\Z^2$
such that for for each $j$ with $1\leq j\leq k$ the following
equation holds:
\begin{eqnarray}
\sum_{i=1}^N |({\bf x_i}\,\,{\bf a_j})|&=&\sum_{i=1}^N 
|({\bf x_i}\,\,{\bf c_j})|.
\end{eqnarray}
We will be able to do this with $N=2$ by proceeding as follows. 
First note that we can work over $\Q^2$ instead of $\Z^2$ throughout,
as on finding ${\bf x_1},\ldots ,{\bf x_N}\in\Q^2$ satisfying (2), we
can clear out denominators by multiplying both sides of (2) by an
appropriate integer. 

Suppose
that the generators of the vertex group $(1,0)$ and $(0,1)$ are mapped
to $m$ and $n$ respectively under $\chi$, with the basis chosen such that
$m\neq 0$. Set $l_j$ to be $\chi({\bf a_j})=ma_j+nb_j$ so that also
$l_j=\chi({\bf c_j})=mc_j+nd_j$, with no $l_j$ being equal to 0 by the
properties  of $\chi$. On setting $l=l_1$, the idea is to consider as
candidates for the ${\bf x_i}$ points of the form
$(\frac{l-ny_i}{m},y_i)$ for $y_i\in \Q$ to be determined. Note that
as all of these points lie on the line $mx+ny=l\neq 0$, these will span
$\Q^2$ provided only that we take more than one distinct point of this
form. We find for each $j$ between 1 and $k$ that
\[\sum_{i=1}^N|({\bf x_i}\,\,{\bf a_j})|
=\sum_{i=1}^N
\left|\sma{cc}\frac{l-ny_i}{m}&\frac{l_j-nb_j}{m}\\
         y_i&b_j\fma\right|
=\sum_{i=1}^N\frac{|lb_j-l_jy_i|}{|m|}
=\frac{|l_j|}{|m|}\sum_{i=1}^N|y_i-l\frac{b_j}{l_j}|\]
and so we also have
\[\sum_{i=1}^N|({\bf x_i}\,\,{\bf c_j})|
=\frac{|l_j|}{|m|}\sum_{i=1}^N|y_i-l\frac{d_j}{l_j}|.
\] 

Now if given two $k$-tuples of rationals $(q_1,\ldots ,q_k)$ and
$(r_1,\ldots ,r_k)$, we can find $N$ and $y_1,\ldots ,y_N\in\Q$
such that
\[
\sum_{i=1}^N|y_i-q_j|=
\sum_{i=1}^N|y_i-r_j|
\]
holds for each $j$ between 1 and $k$ by setting $N=2$, then
letting $y_1$ be the minimum $m$ of our $2k$ rational entries
and $y_2$ be the maximum $M$ whereupon both sums become $M-m$
regardless of $j$. (In order to ensure that $y_1\neq y_2$ for
spanning, we can increase $M$ and decrease $m$ if needed.)

For a general graph $\Gamma$, we will reduce to the above case
by contracting edges one by one in a maximal tree. 
However this will not be the usual process used when calculating
the fundamental group of a graph of groups. Rather, given an
oriented edge $e$ running from the vertex $v\in V(\Gamma)$ to
the vertex $w\neq v$, we will replace the graph of groups
$G(\Gamma)$ by another graph of groups $G_1(\Gamma_1)$ where
although $\Gamma_1$ is the result of contracting the edge $e$
in $\Gamma$ onto the vertex $w$, the fundamental groups $G$ and
$G_1$ will in general be different. The procedure is not to
amalgamate the vertex groups $G_v$ and $G_w$ but rather to replace
$G_v$ with $G_w$ using a suitable isomorphism between them that
respects the homomorphism $\chi$. We then reinterpret the inclusions
into $G_v$ of the edge groups for the other edges ending at $v$ as
inclusions into $G_w$ using this isomorphism.

We start by choosing
a basis for each vertex group so that all the elements in a particular
vertex group can be thought of as lying in $\Z^2$. However when finding
our equitable vectors, we again work
over $\Q^2$ and convert these into
elements of $\Z^2$ by clearing out denominators at the end.

Given the first edge $e_1$ to be contracted, with vertex groups
$V_1^+$ and $V_1^-$ at the vertices $v_1^+,v_1^-$ of $e_1$ which are now
both isomorphic to $\Q^2$, let $g_1^+$ be the
injection into $V_1^+$ of the generator of the edge group at $e_1$, and
let $k_1^+$ be a non zero element of $V_1^+$ such that $\chi(k_1^+)=0$.
Note that $\chi(g_1^+)\neq 0$ by hypothesis, so $g_1^+$ and $k_1^+$
span $\Q^2$.
We also do the same at the other end of $e_1$
to obtain two elements $g_1^-$ and $k_1^-$ of $V_1^-$ and we then
define the isomorphism $\theta_1:V_1^-\rightarrow V_1^+$ by sending
$g_1^-$ and $k_1^-$ to $g_1^+$ and $k_1^+$ respectively. However, as
the elements $k_1^{\pm 1}$ are only defined up to multiplication by
a non zero scalar, we choose them such that the
matrix $P_1$, which represents $\theta_1$ with respect to the original bases 
chosen for the vertex groups,  has determinant 1 in modulus. (This will
mean that $P_1$ has entries in $\Q$, though not necessarily in $\Z$.)
Moreover as $\chi(g_1^+)=\chi(g_1^-)$ and $\chi(k_1^+)=\chi(k_1^-)$
we have that $\chi(\theta_1(\lambda g_1^-+\mu k_1^-))=\chi(\lambda g_1^++
\mu k_1^+)$ on temporarily writing these groups additively.

We now contract the edge $e_1$ from the vertex $v_1^-$ to $v_1^+$. This means
that all other edges (if any) meeting $v_1^-$ get moved
to meet $v_1^+$ and we replace each
generator of the image of these edge groups in $v_1^-$ by applying $\theta_1$
to this element. This results in the graph $\Gamma_1$ which is the
result of contracting the edge $e_1$ in $\Gamma$, along with cyclic
edge groups and embeddings of these into the neighbouring vertex groups
except that we can have fractional powers of elements. Note that as
$\chi$ respects this substitution, we still have a homomorphism from the new
fundamental group $G_1$ to $\Z$ which is non zero on all edge groups and
we call this $\chi$ as well.

We now continue this process, producing further isomorphisms
$\theta_2,\theta_3,\ldots ,$ $\theta_{l-1}$ until $\Gamma$ has been
contracted to the graph $\Gamma_l$ with just one vertex $v_l$, though
there could be many self loops at this vertex. By the previous argument,
we can find an equitable set $\{{\bf x_1},{\bf x_2}\}$ consisting of
a pair of elements that are both in $\Q^2$ for the labelled graph
$\Gamma_l$. This means that for each of the $k$ edges and pair of
elements ${\bf a_j},{\bf c_j}\in\Q^2$ at either end of the $j$th edge,
we have
\begin{eqnarray} 
|({\bf x_1}\,\,{\bf a_j})|+|({\bf x_2}\,\,{\bf a_j})|
&=& |({\bf x_1}\,\,{\bf c_j})|+|({\bf x_2}\,\,{\bf c_j})|.
\end{eqnarray}
(If $\Gamma$ were a tree then we would have no edges left so in this case
we would not contract the final edge. It is then straightforward to
find an equitable set consisting of a pair of elements for each of the
two remaining vertices.)
 
Now we
reverse our contracting process and our isomorphisms. This means
that if immediately prior
to the application of the final isomorphism $\theta_{l-1}$,
we have that there is some $j$ where 
${\bf a_j}$ is
based at $v_l$ but ${\bf c_j}$ is based at the other vertex $v_{l-1}$,
we replace ${\bf c_j}$ by the element $\theta^{-1}_{l-1}({\bf c_j})$
so that this element now lies back in the vertex group $V_{l-1}$.
However we also
pick the elements $\theta_{l-1}^{-1}({\bf x_1})$ and 
$\theta_{l-1}^{-1}({\bf x_2})$ for the part of our equitable set
based at $v_{l-1}$. To see this does make an equitable set, note
that we already had equation (3) holding
and we are now replacing (for $i=1,2$)
${\bf x_i}$ by $P_{l-1}^{-1}({\bf x_i})$ and
${\bf c_j}$ by $P_{l-1}^{-1}({\bf c_j})$, because
$P_{l-1}$ is the matrix representing $\theta_{l-1}$.
But for any 2 by 2 matrix $M$ and ${\bf x},{\bf y}\in \Q^2$ 
we have 
\[|(M{\bf x}\,\,M{\bf y})|=|M({\bf x},{\bf y})|=|M||({\bf x},{\bf y})|\]
and as here $M=P_{l-1}^{-1}$ has determinant $\pm 1$, we see that the
right hand side of (3) is unchanged when we insert the new elements.
Similarly both sides of (3) are unchanged 
if for another value of $j$ both ${\bf a_j}$ and ${\bf c_j}$
get moved back to the vertex $v_{j-1}$, and if they both stay at $v_j$
then nothing at all in (3) is changed.

We can now reverse this process, introducing equitable sets at each
new vertex until we return to $\Gamma$, whereupon we have our original
tubular group $G(\Gamma)$ but now with a pair of equitable elements at every
vertex. 
\end{proof} 

Combining Theorems \ref{fbyc} and \ref{free}, we immediately obtain:
\begin{co} Any tubular group which is also free by $\Z$ acts
freely on a CAT(0) cube complex.
\end{co}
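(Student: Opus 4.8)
The plan is simply to chain together the two theorems already established, since the hypothesis of this corollary---that $G$ is a tubular group which is also free by $\Z$---is verbatim the hypothesis of Theorem \ref{free}, whose conclusion is exactly the statement to be proved. So at the level of the final implication there is nothing new to do: Theorem \ref{free} already delivers an equitable set for $G(\Gamma)$ and hence, via Wise's criterion (\cite{wscb} Theorem 1.1), a free action on a CAT(0) cube complex.

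The role of Theorem \ref{fbyc} is to make this hypothesis usable in practice rather than to add logical content. Concretely, given a tubular $G(\Gamma)$ that is free by $\Z$, I would first invoke the appropriate direction of Theorem \ref{fbyc} to produce a homomorphism $\chi : G \to \Z$ that is non zero on every edge group. This $\chi$ is precisely the input demanded at the start of the proof of Theorem \ref{free}, where it is used to pick the candidate equitable vectors along a level line $mx+ny = l \neq 0$ of $\chi$ in each vertex group. Feeding this $\chi$ into Theorem \ref{free} then yields the pair of equitable elements at every vertex, and therefore the free action.

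I do not expect any genuine obstacle, since all the substantive work---the algebraic characterisation of the free by $\Z$ property and the explicit construction of equitable vectors respecting $\chi$---already resides in Theorems \ref{fbyc} and \ref{free}; the corollary is a purely formal consequence. The one point worth flagging is bookkeeping: one should confirm that the phrase ``tubular free by $\Z$ group'' in Theorem \ref{free} coincides with ``tubular group which is also free by $\Z$'' here, which it does by definition, so that the two results compose with no extra compatibility condition to verify.
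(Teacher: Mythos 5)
Your proposal is correct and matches the paper exactly: the paper states this corollary as an immediate consequence of combining Theorems \ref{fbyc} and \ref{free}, with Theorem \ref{fbyc} supplying the homomorphism $\chi$ nonzero on edge groups and Theorem \ref{free} converting it into an equitable set and hence a free action. Your observation that the corollary is essentially a restatement of Theorem \ref{free}, with Theorem \ref{fbyc} doing its work inside that theorem's proof, is also accurate.
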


\section{The strongest Tits alternative}
A group that has a homomorphism onto the free group $F_2$ must necessarily
have a subgroup isomorphic to $F_2$ by the universal property of free
groups.  In \cite{yagash} a group $G$ is said to satisfy the strongest
Tits alternative if every subgroup $H$ of $G$ (not necessarily finitely
generated) either has a homomorphism onto $F_2$ or is a torsion
free abelian group. The authors showed in this paper that right angled
Artin groups (and hence their subgroups) satisfy the strongest Tits
alternative. Thus any group that is known to be virtually special,
that is having a finite index subgroup which embeds in a right angled
Artin group, will virtually satisfy the strongest Tits alternative.
Here we are interested in establishing this property for
various tubular groups. However a tubular group need not have
a finite index subgroup which embeds in a right angled Artin group,
for instance we will see this in Example 5.2
for the Gersten group, thus we will need an alternative
argument.

Our first problem is that  
given a graph of groups $G(\Gamma)$ with $\Z^2$ vertex groups and $\Z$ edge
groups, it might well be that the inclusion of an edge group
$\langle x\rangle$ in one of its vertex groups is not maximal, meaning that 
there is $g\in\Z^2\setminus\{id\}$ and $n>1$ such that $x=g^n$ (which
is of course not the same as saying that $\langle x\rangle$ is a maximal
subgroup of $\Z^2$). Certainly these can give rise to subgroups of 
the fundamental group $G$ which are not abelian but which do not
surject to $F_2$, even if $\Gamma$ is a tree. For instance
\[\langle a,b,x,y,c,d|[a,b],[x,y],[c,d],b^2=x,c^3=y\rangle\]  
has the subgroup $\langle b,x,y,c|[x,y],b^2=x,c^3=y\rangle$ which is
non abelian (as it surjects to the modular group $C_2*C_3$) but any
surjection to a free group would have to send $\langle x,y\rangle$ to
a cyclic subgroup so the image of this surjection would have first
Betti number 1. However we will see that actually this group has a finite
index subgroup obeying the strongest Tits alternative.

A more extreme situation occurs when the fundamental group $G$ contains a
non Euclidean Baumslag-Solitar subgroup, namely a subgroup isomorphic
to $\langle a,t|ta^mt^{-1}=a^n\rangle$ for $m,n\neq 0$ and where 
$|m|\neq |n|$. These groups do not surject $F_2$ and moreover, by
replacing $a$ with $b=a^d$ where $d$ is the highest common factor of
$m$ and $n$, we have a further subgroup isomorphic to
$B=\langle b,t|tb^{m/d}t^{-1}=b^{n/d}\rangle$ where $m/d$ and $n/d$ are
coprime. If $G$ contains such a subgroup $B$ then no finite index subgroup
$H$ of $G$ can satisfy the strongest Tits alternative. This is because
$B$ is not large, so the finite index subgroup $H\cap B$ of $B$ cannot
surject $F_2$ but it is never abelian either.

However both these examples use non maximal inclusions of edge groups.
In \cite{me} we gave an exact criterion for a finite graph of groups
with $\Z$ edge groups and a wide variety of possible vertex groups,
certainly including $\Z^n$, to contain a non Euclidean Baumslag-Solitar
group and it is immediate from this that it never happens if all edge
inclusions are maximal. Moreover it also cannot occur if the
fundamental group is free by $\Z$ (which covers the case when $\Gamma$
is a tree by Corollary \ref{tree}), but there are examples where all edge
inclusions are maximal and the fundamental group is free by $\Z$ but
does not surject $F_2$. For instance the Burns group
$\langle x,y,t|[x,y],txt^{-1}=y\rangle$ in Example 5.1 where the graph is
a single self loop does not surject $F_2$ as it is 2-generated but not free 
and $F_2$ is Hopfian.
Therefore we can at least hope that the fundamental group virtually
satisfies the strongest Tits alternative in
the two cases where it is free by $\Z$ and also where all edge inclusions are
maximal.  The rest of this section will provide a proof of this, starting
by reducing the first case to the second.
\begin{prop} \label{fincv}
Let $G(\Gamma)$ be a finite graph of groups 
with $\Z^2$ vertex groups and $\Z$ edge groups such that $G$ is free by $\Z$.
Then there exists a finite index subgroup $H$ of
$G$ which can be written as a finite graph of groups 
$H(\Delta)$ with $\Z^2$ vertex groups and $\Z$ edge groups where all
inclusions of edge groups into vertex groups are maximal.
\end{prop}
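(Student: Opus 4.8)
The plan is to produce $H$ as the kernel of a suitable map $G\to\Z/N\Z$ and to read off the new graph of groups from Bass--Serre theory. Since $G$ is free by $\Z$, Theorem \ref{fbyc} supplies a homomorphism $\chi:G\to\Z$ that is non zero on every edge group; I would compose it with reduction modulo a carefully chosen $N$ and set $H=\ker(G\xrightarrow{\chi}\Z\to\Z/N\Z)$, a subgroup of index dividing $N$. Letting $H$ act on the Bass--Serre tree $T$ of $G(\Gamma)$, the quotient $\Delta=H\backslash T$ is a finite graph and $H=\pi_1(H(\Delta))$, where each vertex group is $H\cap gG_vg^{-1}$ and each edge group is $H\cap gE_eg^{-1}$ for suitable $g\in G$. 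As these are finite index subgroups of $\Z^2$ and $\Z$ respectively, $H(\Delta)$ is again a graph of groups of the required type, and it remains only to choose $N$ so that every edge inclusion in $H(\Delta)$ becomes maximal.

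The heart of the argument is a local computation at a single incidence of an edge $e$ with a vertex $v$. Conjugating, I may assume $g=\mathrm{id}$, so that the edge group generator $x$ embeds in $G_v\cong\Z^2$ as $x=g_0^{\,n}$ with $g_0$ primitive and $n\geq 1$ the wrapping number; write $p=\chi(g_0)$, which is non zero because $\chi(x)\neq 0$, and $l=\chi(x)=np$. The new edge group $H\cap\langle x\rangle$ is generated by $x^{N/\gcd(N,l)}$, while the new vertex lattice $V'=H\cap\Z^2$ meets the rational line $\Q g_0$ in the infinite cyclic group generated by $g_0^{\,N/\gcd(N,p)}$. Comparing these, the new edge generator equals $g_0^{\,nN/\gcd(N,l)}$, so it is the generator of $V'\cap\Q g_0$ precisely when $nN/\gcd(N,l)=N/\gcd(N,p)$, that is when $n\gcd(N,p)=\gcd(N,l)$.

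To force this identity at every incidence simultaneously, I observe that whenever $l\mid N$ we get $\gcd(N,l)=l=np$ and, since then $p\mid N$ as well, $n\gcd(N,p)=np=l$; so the condition holds. As there are only finitely many edges, I take $N$ to be any common multiple of the integers $|\chi(E_e)|$ over all edge groups $E_e$ (for example their product), which is legitimate since each $\chi(E_e)\neq 0$. Because $|\chi(E_e)|$ is the same at both ends of $e$, this single divisibility controls both incidences, so with this $N$ all edge inclusions of $H(\Delta)$ are maximal and $H$ has the desired finite index. The step demanding the most care is the bookkeeping between the two descriptions of the edge and vertex groups in the cover: identifying the generator of $H\cap\langle x\rangle$ and the generator of $V'\cap\Q g_0$, and checking that the latter is genuinely primitive in the whole lattice $V'$ and not merely in that rational direction (this is the standard fact that the intersection of a lattice with a line is generated by a primitive lattice vector). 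Granting this, the divisibility condition $l\mid N$ makes the remaining verification routine.
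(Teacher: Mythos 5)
Your proposal is correct and follows essentially the same route as the paper: take $H$ to be the kernel of $\chi$ reduced modulo a common multiple of the values $|\chi(E_e)|$ (the paper uses the least common multiple $M$), read off $H(\Delta)$ from the action on the Bass--Serre tree, and verify maximality of the edge inclusions by a local computation at each incidence. The paper's maximality check is a one-line version of yours --- if $x_i^{d_i}=h^n$ with $h$ in a vertex group of $H$ then $M=|n|\cdot|\chi(h)|$ while $\chi(h)\equiv 0 \pmod M$ forces $|n|=1$ --- but the content is the same.
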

\begin{proof}
As we have a homomorphism $\chi:G(\Gamma)\twoheadrightarrow\Z$ which is 
non zero on every edge group by Theorem \ref{fbyc}, 
for a generator $x_i$ of each edge group $\langle x_i\rangle$ we can set
$\chi(x_i)=m_i\neq 0$ and let $M$ be the lowest common multiple over all
of the finitely many $m_i$. We take for $H$ the kernel of the map
$\chi$ modulo $M$ which is a surjective homomorphism from $G$ to the
cyclic group $C_M$. Consider the action of $G(\Gamma)$ on its Bass - Serre
tree $T$, where of course we have $\Z^2$ vertex stabilisers and $\Z$
edge stabilisers and such that there are only finitely many orbits of
vertices and edges under $G(\Gamma)$. Now $H$ also acts on this tree
with vertex stabilisers $H\cap \Z^2$ and edge stabilisers $H\cap\Z$, which
are isomorphic to $\Z^2$ or $\Z$ respectively because $H$ has finite
index in $G$. Thus by taking the quotient $H\backslash T$ we see that
$H$ is also a graph of groups $H(\Delta)$
with $\Z^2$ vertex groups and $\Z$ edge groups, and $\Delta$ is a finite
graph because $H$ having finite index in $G$ also means there are
finitely many vertices and edges in $\Delta$.

Finally we consider an edge subgroup $\langle x_i\rangle$ in $G$ with
$\chi(x_i)=m_i$ and we set $d_i=M/m_i$. This tells us that
$\langle x_i\rangle\cap H$ is $\langle x_i^{d_i}\rangle$ and so in the
graph of groups $H(\Delta)$ there will be two inclusions of $x_i^{d_i}$,
each of which is into a vertex subgroup isomorphic to $\Z^2$. But if
$x_i^{d_i}=h^n$ for $h\in\Z^2\subseteq H$ then $M=|n|\cdot |\chi(h)|$, but
$\chi(h)$ must be zero modulo $M$ so $|n|=1$ and $x_i^{d_i}$ is a maximal
element of this vertex subgroup of $H(\Delta)$.
\end{proof} 

In looking for homomorphisms from the fundamental group of a
graph of groups onto $F_2$, we
begin with the following straightforward but useful lemma, giving us
homomorphisms onto $\Z$. 

\begin{lem} \label{hom}
Suppose that $G(T)$ is a (possibly infinite) graph of groups
with $T$ a tree where all vertex groups are free abelian and
all edge groups embed as direct summands of the vertex groups.
Then any non trivial homomorphism $h$ from a vertex group to
$\Z$ can be extended to a homomorphism $h:G\rightarrow\Z$.
\end{lem}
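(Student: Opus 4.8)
The plan is to exploit the fact that, because $T$ is a tree, the fundamental group $G$ is the iterated amalgamated free product of the vertex groups along the edge groups, with no stable letters. Consequently, by the universal property of this amalgam, a homomorphism from $G$ to any group (in particular to the abelian group $\Z$) is precisely the same data as a family of homomorphisms $h_v\colon G_v\to\Z$, one for each vertex $v$, which agree on each edge: that is, for every edge $e$ joining $u$ and $w$, with the edge group embedding as $\iota_u\colon G_e\hookrightarrow G_u$ and $\iota_w\colon G_e\hookrightarrow G_w$, we require $h_u\circ\iota_u=h_w\circ\iota_w$. Thus I only need to produce such a compatible family extending the given $h$ on the distinguished vertex group $G_{v_0}$.

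First I would root the tree $T$ at the vertex $v_0$ on whose group $h$ is defined, and set $h_{v_0}=h$. I then extend outward by induction on the distance from $v_0$. Suppose $h_u$ has been defined for every vertex $u$ within distance $n$, and let $w$ be a vertex at distance $n+1$, reached along the unique edge $e$ from its parent $u$ (which lies at distance $n$). The composite $h_u\circ\iota_u\colon G_e\to\Z$ is already determined, and I want to choose $h_w\colon G_w\to\Z$ with $h_w\circ\iota_w=h_u\circ\iota_u$. This is exactly the point at which the hypothesis is used: since $\iota_w$ embeds $G_e$ as a direct summand of the free abelian group $G_w$, we may write $G_w=\iota_w(G_e)\oplus D$ for some complement $D$, and then define $h_w$ to agree with $(h_u\circ\iota_u)\circ\iota_w^{-1}$ on $\iota_w(G_e)$ and to be zero on $D$. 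This is a well defined homomorphism taking the required values on the image of the edge group.

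Because $T$ is a tree, every vertex other than $v_0$ is reached by a unique edge from its parent, so each edge is used exactly once in the construction and no compatibility condition is ever imposed twice; there are no cycles that could force an inconsistency. This is precisely why the direct summand condition suffices rather than something stronger --- without it the extension step could fail, as the inclusion $2\Z\subset\Z$ already shows, since a map sending the generator of $2\Z$ to $1$ has no extension to $\Z$. The construction goes through verbatim when $T$ is infinite, because the induction on distance defines $h_v$ on every vertex group, and any element of $G$ is a finite product of elements drawn from finitely many vertex groups, so the resulting family assembles into a genuine homomorphism on all of $G$ by the universal property. The main (and essentially only) obstacle is the single extension step, and the direct summand hypothesis is exactly what makes it go through.
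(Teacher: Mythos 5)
Your proof is correct and follows essentially the same route as the paper: root the tree at the vertex where $h$ is given, extend outward by induction on distance using the direct summand hypothesis at each edge, and pass to a direct limit (equivalently, use that every element lies in a finite subtree) when $T$ is infinite. Your write-up simply makes explicit the universal-property bookkeeping that the paper leaves implicit.
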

\begin{proof} Let the root vertex of the tree $T$ be the vertex where
$h$ is defined. This then determines an
integer valued homomorphism on all the edge groups out of the root
vertex. Thus for any vertex joined to the root, we can extend $h$ from
the inclusion of the edge subgroup to the whole vertex group, because
it is a direct summand.
We can now continue this process over
all vertices to get the surjective homomorphism 
$h:G(T)\twoheadrightarrow\Z$, and then take a direct limit if 
$T$ is infinite.
\end{proof}

We can now use this for the following theorem.
\begin{thm} \label{onel}
Let $G(\Gamma)$ be a finite graph of groups having $\Z^2$ vertex groups 
and $\Z$ edge groups and such that all inclusions
of edge groups in vertex groups are maximal. 
Let $S$ be any subgroup of $G$, so that by restricting the action of $G$
on its Bass - Serre tree $T$ we have that $S$
is also a graph of groups $S(\Sigma)$ with vertex stabilisers equal
to $S\cap\Z^2$ and edge stabilisers equal to $S\cap\Z$. Then either
$S$ is free abelian (thus isomorphic to $\{id\}, \Z$ or $\Z^2$), or it
surjects the free group $F_2$, or the graph
 $\Sigma$ contains only one loop.
\end{thm}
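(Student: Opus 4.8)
The plan is to split according to the first Betti number $b_1(\Sigma)$ of the quotient graph, using that collapsing every vertex group of $S(\Sigma)$ to the identity yields a surjection $S\twoheadrightarrow\pi_1(\Sigma)$, where $\pi_1(\Sigma)$ is free of rank $b_1(\Sigma)$ (the edge groups die once the vertex groups do). If $b_1(\Sigma)\geq 2$ this free group already surjects $F_2$, so $S$ does too; if $b_1(\Sigma)=1$ then $\Sigma$ contains exactly one loop, which is the third alternative. Thus everything reduces to the case where $\Sigma$ is a tree, and here I aim to show that $S$ is either free abelian or surjects $F_2$.

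First I would check that the hypotheses of Lemma \ref{hom} pass to $S(\Sigma)$. Writing a vertex group as $W=S\cap\Z^2$ and an incident edge line as $L=\langle g\rangle$, maximality of the original inclusion makes $g$ primitive, so $L$ is a direct summand and $\Z^2/L\cong\Z$. The edge group of $S(\Sigma)$ at that end is $W\cap L$, and the quotient $W/(W\cap L)$ embeds in $\Z^2/L\cong\Z$, hence is free; therefore $W\cap L$ is a direct summand of $W$. So Lemma \ref{hom} is available on every subtree, even when $\Sigma$ is infinite, where the required homomorphisms to $\Z$ are obtained by the direct limits permitted in that lemma.

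Next I would reduce the tree. Every edge group is trivial or $\Z$ and every vertex group is $\{id\}$, $\Z$ or $\Z^2$; at a $\Z$ vertex each nontrivial incident edge group equals the whole vertex group, so I absorb that vertex into a neighbour (re-attaching its remaining edges along the summand inclusion), which does not change $S$. Iterating, I reach a tree with the same fundamental group in which every edge group is a proper summand of both of its endpoint groups. If this reduced tree is a single vertex then $S\leq\Z^2$ is free abelian (so $\{id\}$, $\Z$ or $\Z^2$). Otherwise it has an edge $e$, and I split $S=A\ast_{C_e}B$ across it, with $C_e$ now proper in both immediate endpoint groups.

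It remains to produce the surjection in this last, non-abelian, situation, which I expect to be the crux. If $C_e=\{id\}$ then $S=A\ast B$ with $A,B$ nontrivial, each surjecting $\Z$ by Lemma \ref{hom}, so $S\twoheadrightarrow\Z\ast\Z=F_2$. If $C_e=\langle g\rangle\cong\Z$ then, $C_e$ being a proper summand at each end, both immediate endpoint groups are copies of $\Z^2$, say $\langle g,h\rangle$ and $\langle g,h'\rangle$. I then kill $g$: on the $A$-side endpoint define $\alpha(g)=0$, $\alpha(h)=1$ and extend $\alpha$ over $A$ by Lemma \ref{hom}, obtaining a nonzero homomorphism $A\to\Z$ vanishing on $g$, and likewise on $B$. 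Hence $A/\langle\langle g\rangle\rangle$ and $B/\langle\langle g\rangle\rangle$ both surject $\Z$, and $S/\langle\langle g\rangle\rangle=A/\langle\langle g\rangle\rangle\ast B/\langle\langle g\rangle\rangle\twoheadrightarrow\Z\ast\Z=F_2$. The delicate point is that mere non-abelianness need not exhibit an edge with $\Z^2$ on both sides (the chain $\Z^2-\Z-\Z^2$ has none), so the absorbing reduction is essential: it converts non-abelianness into a visibly amalgamated $\Z^2\ast_{\langle g\rangle}\Z^2$ whose amalgamated direction $g$ can be killed while a complementary summand survives on each side.
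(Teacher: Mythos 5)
Your overall strategy is the same as the paper's: use the surjection $S\twoheadrightarrow\pi_1(\Sigma)$ to dispose of $b_1(\Sigma)\geq 2$ and to isolate the one-loop alternative, then in the tree case absorb the $\Z$-type vertices (where maximality forces a nontrivial edge group to equal the vertex group), locate an edge whose group is proper in both endpoint groups, and apply Lemma \ref{hom} on each side to kill the amalgamated element and map onto $\Z\ast\Z$. Your verification that the edge groups of $S(\Sigma)$ remain direct summands of the vertex groups of $S(\Sigma)$ is correct and slightly more explicit than the paper's.

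There is, however, a concrete hole in the reduction: you never deal with vertices of trivial stabiliser, and these certainly occur (for instance when $S$ acts freely on $T$, or meets some vertex stabilisers trivially). Your claimed normal form --- ``a tree in which every edge group is a proper summand of both of its endpoint groups'' --- is false at a $\{id\}$-type vertex, since $\{id\}$ is not proper in $\{id\}$, and the subsequent case analysis then breaks: if the reduced tree is a $\Z^2$ vertex joined by a (necessarily trivial) edge to a $\{id\}$ vertex, your $C_e=\{id\}$ case asserts $A$ and $B$ are both nontrivial and concludes $S\twoheadrightarrow F_2$, whereas in fact $B=\{id\}$ and $S\cong\Z^2$, which does not surject $F_2$. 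The fix is the paper's move: a trivial vertex contributes a trivial free factor, so contract its incident edges (or prune it) before choosing $e$; equivalently, only split along an edge both of whose sides contain a nontrivial vertex group, and if no such edge exists conclude $S$ is free abelian. A second, smaller loose end is the infinite case: ``iterating'' the absorption need not terminate when $\Sigma$ is infinite, so the reduction and the final splitting must be phrased over an exhaustion by finite subtrees, checking (as the paper does) that once a suitable edge appears in some $\Sigma_N$ the two homomorphisms to $\Z$ extend consistently to all larger subtrees and hence to $S$. Both repairs are routine, but as written the argument proves a false statement in the degenerate case above.
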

\begin{proof}
As $S$ is any subgroup of $G$ it might have infinite index in $G$ or even
be infinitely generated, so we cannot assume $\Sigma$ is a finite graph. 
However in any event it is well known that there is a
surjection from $S$ to the fundamental group $\pi_1(\Sigma)$ of the
underlying graph, so that 
if $\Sigma$ contains at least two loops then 
$S$ surjects to $F_2$ in this case.

Thus we can now suppose that $\Sigma$ is a tree
where the vertex subgroups
of $S(\Sigma)$ are all isomorphic to $\{id\},\Z$ or $\Z^2$ and the edge
subgroups $S\cap\Z$ either are trivial or are copies of $\Z$. Moreover if
here the vertex subgroup $S\cap\Z^2$ is itself isomorphic to $\Z^2$ then we 
would have that $S\cap\Z^2$ has finite index in the original $\Z^2$ vertex
subgroup of $G(\Gamma)$, so therefore any edge subgroup $E$
of $G(\Gamma)$ that
has an inclusion into this vertex subgroup $\Z^2$ will have $S\cap E$
also isomorphic to $\Z$. Furthermore if $\{id\}$ is now a vertex subgroup
in $S(\Sigma)$ then any edge with endpoint this vertex will also have
trivial stabiliser. However in the case when the new vertex subgroup 
$S\cap\Z^2$ is isomorphic to $\Z$, we have that the edge subgroups
including in this vertex subgroup can either be trivial or 
themselves isomorphic to $\Z$.

In summary we have a (possibly infinite) graph of groups $S(\Sigma)$
where the underlying graph $\Sigma$ is actually a tree and
where each vertex group is $\{id\},\Z$ or $\Z^2$, with edge groups
$\{id\}$ joining what we will call $\{id\}$ type vertices, edge groups
$\{id\}$ or $\Z$ joining $\Z$ type vertices and edge groups $\Z$
joining $\Z^2$ type vertices. Moreover we know that in $G(\Gamma)$ all
inclusions of edge groups into vertex groups are maximal and so the same is
true here because we are just intersecting each edge and vertex group
with the same subgroup $S$. 

We begin by assuming here that $\Sigma$ is a finite tree.
If every vertex group of $\Sigma$ is $\Z^2$
then every edge group must be $\Z$. In this case we can
take any edge in $\Sigma$ and by choosing the
relevant bases of the two neighbouring vertex groups, we can suppose
that these two vertex groups are $\langle x,y\rangle\cong\Z^2$ and
$\langle a,b\rangle\cong\Z^2$, where $x$ is amalgamated with $a$. Hence
on removing this edge to obtain two disjoint trees $T_1$ and $T_2$,
we have
the graph of groups $G_1(T_1)$ containing the $\langle x,y\rangle$ vertex
and $G_2(T_2)$ containing the $\langle a,b\rangle$ vertex. Thus we can
apply Lemma \ref{hom} to get
a homomorphism $h$ from $G_1(T_1)$ onto $\Z$ which sends
$x$ to zero and $y$ to 1.
We now do the same with $G_2(T_2)$ and as $x$ and $a$ are sent to zero by
these separate homomorphisms to $\Z$, we have that
$S=G_1*_{x=a}G_2$ surjects to $F_2$.

Indeed if there exists anywhere in $\Sigma$
two $\Z^2$ type vertices which are joined by an
edge (so that this edge group must be $\Z$)  then the argument just given
works here too. Another case that is easily dealt with is when there are
two $\Z$ type vertices which are joined by an edge having trivial
stabiliser. On removal of this edge to form trees $T_1$ and $T_2$ with
notation as before, we now have that $S$ is the free product $G_1*G_2$
and so we can apply Lemma \ref{hom} to each of these trees to conclude
that $G_1,G_2$ both surject $\Z$ and thus $S$ surjects $F_2$.

However there are other cases to consider here and we need to do this
in a way which will extend to infinite trees. First suppose that we
have two distinct vertices $v,w$ of type $\Z^2$ and consider the unique
path in $\Sigma$ running between them. If a vertex $u$ in this
path has type $\{id\}$ then we can ignore it, because 
on contracting one of the two edges that lie in the path and pass through $u$
onto its other endpoint, we are forming
a free product where one of the factors is trivial. As the
stabiliser of both these edges must be trivial, the process
of contracting edges in this path can only end when we obtain an edge
with trivial stabiliser which joins vertices that are both of $\Z$ type
and this has been dealt with above. If however there are no $\{id\}$ type
vertices then either we use the above argument on a $\{id\}$ type edge
which will join two $\Z$ type vertices as before, or a very similar argument
works by removing all $\Z$ type vertices in this path.
This is because the edge groups must now be $\Z$ and
the edge inclusions are maximal, thus 
the inclusion of any edge subgroup is also equal to any vertex group $V_1$
which is of $\Z$ type.
Thus the amalgamation formed by contracting
an edge joining this vertex to a neighbouring vertex with group $V_2$
results in the trivial amalgamation $V_2*_{V_1}V_1=V_2$. 

This covers the case when $\Sigma$ has two $\Z^2$ type vertices but in fact
essentially the same argument applies 
whenever we have a path between vertices $v$ and $w$ with
vertex groups $V,W$ respectively such that both the inclusion into $V$ of
the first edge group and the inclusion into $W$ of the last edge group are
proper. Thus now suppose this never occurs in $S(\Sigma)$. If there is
exactly one $\Z^2$ type vertex then the whole tree can be contracted back
to this vertex without changing the fundamental group, so that $S=\Z^2$.
This also works if there is a $\Z$ type vertex joined to an edge with
trivial stabiliser, leaving only the case where all vertex and edge groups
are isomorphic to $\Z$, in which case $S$ is $\Z$ too as all edge inclusions
are maximal, and the trivial case where all stabilisers equal $\{id\}$.

As for the case where $\Sigma$ is an infinite tree so that $S$ is defined as
a direct limit of the fundamental group of finite subtrees $\Sigma_n$,
we see that once we have taken a finite subtree $\Sigma_N$ that includes a
path with proper edge inclusions at each end, our use of Lemma \ref{hom}
twice to obtain a surjection from the fundamental group of $\Sigma_n$ to
$F_2$ will be consistently defined for $n\geq N$, because each homomorphism
to $\Z$ is defined from the starting vertex outwards, so that this
surjection extends to $S$ as well. If however this never occurs in $\Sigma$
then the fundamental groups stabilise as $\Z^2$, $\Z$ or $\{id\}$.
\end{proof}

Note: the first paragraph of the proof, along with Lemma \ref{hom}, allows
us to conclude that if $G(\Gamma)$ is a graph of groups with free abelian
vertex groups and all edge subgroups embed as direct summands of the vertex
groups then every non trivial subgroup of $G$ has a surjective homomorphism
to $\Z$, thus $G$ is locally indicable, left orderable, has the unique
product property, has no zero divisors in its group algebra and so on.
Moreover the same techniques show that if $G(\Gamma)$ is a finite graph
of groups with finitely generated free abelian vertex groups then, regardless
of the edge embeddings, $G$ is locally indicable and so has the other 
properties. This is because any edge group will embed as a finite index
subgroup of a direct factor, so in the case where the subgroup $S$ is 
such that $S\backslash T$ is a tree
(where $T$ is the Bass-Serre tree for $G$), application of Lemma
\ref{hom} will obtain a non trivial homomorphism $h$ from $S$ to $\mathbb Q$.
But if $S$ is finitely generated then there is a compact subgraph of 
$S\backslash T$ such that the
inclusion of the corresponding graph of groups induces an isomorphism of
fundamental groups. Consequently we can assume that this tree is finite
and so $h$ has discrete image.
    
We now have to go back to finding surjective homomorphisms to $F_2$,
where $\Sigma$ having a single loop is the case that remains, though we have
seen in the example of the Burns group that the conclusion is false if
this loop is a self loop. 
Thus we next look at the case where all vertex groups are $\Z^2$,
all inclusions of edge groups are maximal, and
our graph has a single loop which is not a self loop.
\begin{prop} \label{sloop}
Suppose that $G(\Gamma)$ is a (possibly infinite) graph of groups 
with $\Z^2$ vertex groups and $\Z$ edge groups and such that all inclusions
of edge groups in vertex groups are maximal. If the graph has a single loop
and this is not a self loop then the fundamental group $G$ of the graph of
groups $G(\Gamma)$ surjects to $F_2$.
\end{prop}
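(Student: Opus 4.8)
The plan is to exhibit an explicit surjection $\phi\colon G\twoheadrightarrow F_2$ by exploiting the extra generator supplied by the single loop. Choose a spanning tree $T$ of $\Gamma$ and let $e_0$ be the one remaining edge, so that $G=\langle A,s\mid s\,c_-\,s^{-1}=c_+\rangle$ is an HNN extension of the tree of groups $A=G(T)$, where $s$ is the stable letter of $e_0$ and $c_-\in V_1$, $c_+\in V_2$ are the two inclusions of a generator of the edge group of $e_0$ into the vertex groups $V_1,V_2\cong\Z^2$ at its endpoints $v_1,v_2$. Since the loop is not a self loop we have $v_1\neq v_2$. Writing $F_2=\langle t,s\rangle$, I would look for a homomorphism $h\colon A\to\langle t\rangle\cong\Z$ and set $\phi|_A=h$, $\phi(s)=s$. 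The key observation is that the defining relation then becomes $s\,t^{h(c_-)}s^{-1}=t^{h(c_+)}$, which holds in the free group $F_2$ precisely when $h(c_-)=h(c_+)=0$; and if $h$ is surjective onto $\langle t\rangle$ (which we may arrange by dividing a nontrivial $h$ by the generator of its image) then $\phi$ has image containing both $t$ and $s$ and so equals $F_2$. Thus everything reduces to finding a nontrivial $h\colon A\to\Z$ with $h(c_-)=h(c_+)=0$.

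The existence of such an $h$ is exactly where the hypothesis $v_1\neq v_2$ is used. I would consider the unique path $v_1=u_0,u_1,\ldots,u_k=v_2$ in the tree $T$, whose length is $k\geq 1$. A homomorphism to $\Q$ on the sub-graph-of-groups carried by this path is the same thing as a choice of a linear functional on each $V_{u_i}\otimes\Q\cong\Q^2$ which is compatible across each of the $k$ edges, that is, a solution of a system of $k$ linear conditions inside a space of dimension $2(k+1)$. Imposing the two further conditions $h(c_-)=0$ and $h(c_+)=0$ leaves a solution space of dimension at least $2(k+1)-k-2=k\geq 1$. Hence there is a nonzero rational solution, and since the path is finite we may clear denominators to obtain integer values along the path giving a nontrivial $h$ with $h(c_\pm)=0$.

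It then remains to extend $h$ from the path to all of $A$ while preserving integrality, and here I would iterate the mechanism of Lemma \ref{hom}. For each branch of $T$ hanging off a vertex $u$ of the path, the value of $h$ on the first edge group is already fixed by $h|_{V_u}$; since every edge inclusion is maximal, that edge generator is a primitive vector of the neighbouring $\Z^2$ and hence part of a basis, so $h$ extends across the edge with an integer value together with a free integer choice on the complementary basis vector. Continuing outward, and passing to a direct limit if $T$ is infinite, yields $h\colon A\to\Z$ as required, and with it the surjection $\phi$ described above.

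The only delicate point is the dimension count, and in particular the role of the hypothesis that the loop is not a self loop: it is exactly what forces $k\geq 1$ and thereby guarantees a nonzero solution. For a self loop one would have $v_1=v_2$, so both conditions $h(c_-)=0$ and $h(c_+)=0$ would be imposed on a single copy of $\Z^2$ and could leave only $h\equiv 0$; this is precisely why the Burns group is a genuine exception to the conclusion.
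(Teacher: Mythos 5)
Your proof is correct, and it takes a genuinely different route from the paper's. The paper works directly with the loop $\Lambda$: it splits into two cases according to whether some vertex of $\Lambda$ has its two loop-edge groups including as the same cyclic subgroup, and in each case writes down an explicit map to $F_2=\langle u,v\rangle$ with hand-picked exponents (in the second case $x\mapsto u^{jl}$, $y\mapsto u^{-il}$, $a\mapsto vu^{jl}v^{-1}$, $b\mapsto vu^{-jk}v^{-1}$), so that the vertex groups do \emph{not} all land in one cyclic subgroup and one concludes only that the image is a nonabelian, hence rank $\geq 2$ free, subgroup of $F_2$. You instead present $G$ as an HNN extension over the spanning-tree group $A$ and prove, by a dimension count along the tree path joining the two distinct endpoints of the extra edge ($k+2$ homogeneous linear conditions in a $2(k+1)$-dimensional space of functionals, leaving a solution space of dimension at least $k\geq 1$), that there is a nontrivial $h\colon A\to\Z$ killing both inclusions of the HNN edge group. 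This buys a single uniform construction with image exactly $F_2$, replaces the paper's explicit exponent bookkeeping by an existence argument (the paper's case split corresponds to whether your linear system degenerates, and its exponents are essentially a by-hand solution of the analogous compatibility system), and makes transparent where ``not a self loop'' enters: it forces $k\geq 1$, whereas for a self loop both vanishing conditions land on the same copy of $\Z^2$ and can force $h\equiv 0$, as the Burns group shows. Both arguments then use the same mechanism --- Lemma \ref{hom} together with maximality of the edge inclusions --- to extend over the trees hanging off the loop and to pass to a direct limit when $\Gamma$ is infinite; the only caveat is that if your version is to be substituted into the proof of Proposition \ref{nearm}, you should note (as the paper does for its own proof) that the extension step still goes through when the vertex and edge groups outside the loop are merely subgroups of $\Z^2$ and $\Z$.
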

\begin{proof}
We can form a graph of groups using just the loop $\Lambda$ in $\Gamma$,
which we will call $L(\Lambda)$, and we will initially define our
homomorphism on the group $L$.

First suppose that there is a vertex $v$ lying in the loop $\Lambda$
with vertex group
$\langle x,y\rangle$ say, having the following property: on taking the
two edges joining $v$ that also lie in $\Lambda$, the inclusions of
these edge groups into $\langle x,y\rangle$ are the same subgroup,
which we will take to be $\langle x\rangle$.
Then we can define a homomorphism from $L(\Lambda)$ to $F_2=\langle u,v\rangle$
as follows: we send
$y$ to $u$ and every other vertex subgroup in $\Lambda$ to the identity, 
along with the element $x$. 
But $L(\Lambda)$ will have another generator,
namely the stable letter $t$ coming from the loop and we send this to $v$
which will provide a well defined surjection from $L$ to $F_2$.

If there is no vertex in $\Lambda$ with this property then we will 
take any edge in the loop and this will span distinct vertex groups
$\langle x,y\rangle$ and $\langle a,b\rangle$, with the edge group
inclusions assumed to be
$\langle x\rangle $ and $\langle a\rangle$ respectively. When forming
$L(\Lambda)$ we will remove this edge to form a maximal tree, so we can
assume that in $L$ we have $txt^{-1}=a$, where $t$ is the stable letter
corresponding to the loop $\Lambda$. We further assume that the 
edge group of the other edge lying in $\Lambda$ and joined to the vertex
with vertex group $\langle x,y\rangle$ is generated by
$x^iy^j$ for coprime $i$ and $j$,
and similarly we have the edge group generated by $a^kb^l$ which is
joined to the $\langle a,b\rangle$ vertex. We suppose that
$F_2$ has free basis $u,v$ and we start by sending $t$ to $v$,
$x$ to $u^{jl}$ and $y$ to $u^{-il}$ so that the edge group 
$\langle x^iy^j\rangle$ is sent to the identity. Similarly we send
$a$ to $vu^{jl}v^{-1}$ and $b$ to $vu^{-jk}v^{-1}$. We can now send all other
vertex groups in the loop $\Lambda$
to the identity.

Having defined in either case our homomorphism on $L(\Lambda)$,
back in $\Gamma$ we may also have 
trees emanating from the vertices in the
loop, so we extend the definition by sending all vertex groups in a tree
to zero if its root vertex group was sent there. This now leaves the two
trees $T_x$ with root $\langle x,y\rangle$ and $T_a$ with
$\langle a,b\rangle$ (or just the tree $T_x$ in the first case). 
For each of these
we can define homomorphisms $h_x,h_a$
to $\Z$ with $h_x(x)=h_a(a)=jl$ and $h_x(y)=-il$, $h_a(b)=-jk$
using Lemma \ref{hom} (or $h_x(x)=0, h_x(y)=1$ in the first case).
We then compose
so that an element $g_x$ in the fundamental group formed from $T_x$
is sent to $u^{h_x(g_x)}\in F_2$ and likewise any
$g_a$ in the fundamental group formed
from $T_a$ goes to $vu^{h_a(g_a)}v^{-1}$. 
This now allows us to extend our homomorphism to the whole of $G(\Gamma)$,
even when $\Gamma$ is infinite,
and we have that the image is a non abelian subgroup of $F_2$, so certainly
$G$ surjects to $F_2$. 
\end{proof}

However this result above requires the vertex groups to equal $\Z^2$ and the
edge groups to equal $\Z$, not subgroups of these which will be
encountered when dealing with an arbitrary subgroup $S$ of $G(\Gamma)$.
We now deal with this case, though we will need to avoid some subgroups
of $G$ which, as well as the situation when $\Gamma$ contains self loops,
will require dropping to a finite index subgroup $H$ of $G$ before 
establishing that the strongest Tits alternative holds for $H$. 
\begin{prop} \label{nearm} 
Suppose that $G(\Gamma)$
is a finite graph of groups having $\Z^2$ vertex groups 
and $\Z$ edge groups with $\Gamma$ containing no self loops,
and such that all inclusions
of edge groups in vertex groups are maximal. 
Let $S$ be any subgroup of $G$. Then either $S$ surjects to $F_2$
or is free abelian (therefore isomorphic to $\{id\}$, $\Z$ or $\Z^2$)
or the graph of groups $S(\Sigma)$ obtained by setting 
$\Sigma=S\backslash T$ for $T$ the Bass - Serre tree of $G$ results in
$\Sigma$ being a graph with a single loop $\Lambda$ which is not a self
loop, and such that all edges in $\Lambda$ are of $\Z$ type, whereas at most
one vertex in $\Lambda$ is of $\Z^2$ type, with all other vertices in
$\Lambda$ of $\Z$ type.  
\end{prop}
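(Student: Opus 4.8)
The plan is to refine the trichotomy of Theorem \ref{onel} in the single-loop case. First I would apply Theorem \ref{onel} to $S$: either $S$ is free abelian or surjects $F_2$, in which cases we are done, or the quotient graph $\Sigma=S\backslash T$ carries a single loop $\Lambda$, so assume the latter. I would immediately record that $\Lambda$ cannot be a self loop: if an edge $e$ of $T$ had both endpoints in one $S$-orbit then, since $S\leq G$, they would lie in one $G$-orbit, making $\Gamma=G\backslash T$ have a self loop, contrary to hypothesis. Hence every edge of $\Sigma$, and in particular of $\Lambda$, joins distinct vertices.

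Next I would eliminate $\{id\}$-type edges from $\Lambda$. Recall from the proof of Theorem \ref{onel} that the vertex groups of $S(\Sigma)$ are $\{id\},\Z$ or $\Z^2$, the edge groups are $\{id\}$ or $\Z$, and all inclusions remain maximal; in particular every edge inclusion is as a direct summand, since a maximal cyclic subgroup of $\Z^2$ is a summand and maximality forces a $\Z$-into-$\Z$ inclusion to be onto. Suppose $\Lambda$ contained an $\{id\}$-type edge $e$. Removing $e$ leaves a tree $\Sigma'$, and $S$ becomes the HNN extension over the trivial group, i.e.\ $S=\pi_1(S(\Sigma'))\ast\Z$. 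If $\pi_1(S(\Sigma'))$ is trivial then $S\cong\Z$; otherwise it has a nontrivial free abelian vertex group, so Lemma \ref{hom} gives a surjection $\pi_1(S(\Sigma'))\twoheadrightarrow\Z$ and hence $S\twoheadrightarrow\Z\ast\Z=F_2$. Either way we are back in the first two cases, so I may assume all edges of $\Lambda$ are of $\Z$ type. Since an $\{id\}$-type vertex forces its incident edges to be trivial, this also forces every vertex of $\Lambda$ to be of $\Z$ or $\Z^2$ type.

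The main step, and the hardest part, is to show that if $\Lambda$ has two distinct $\Z^2$-type vertices then $S\twoheadrightarrow F_2$. The obstruction is exactly that $\Lambda$ is a loop: removing a single edge does not disconnect $\Sigma$, so the tree-splitting arguments of Theorem \ref{onel} (which combine two maps to $\Z$ across a separating edge) are unavailable, and one is forced to use the loop generator itself. To handle this I would first contract every $\Z$-type vertex of $\Lambda$ into a neighbour along one of its loop edges; this is a legitimate collapse, since maximality makes a $\Z$-into-$\Z$ edge inclusion an isomorphism, so it preserves $S$ and the maximality of all inclusions, merely relocating any hanging trees to the neighbouring vertex. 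As the two $\Z^2$-type vertices are never collapsed, the result is a genuine single non-self loop all of whose vertices are now $\Z^2$, with $\Z$ edge groups included maximally, and with trees of free abelian groups and direct-summand edge inclusions hanging off. This is precisely the configuration to which the construction in the proof of Proposition \ref{sloop} applies: it defines a surjection onto $F_2$ using the loop while sending each hanging tree into a cyclic subgroup via Lemma \ref{hom} (valid here since all relevant edge inclusions are direct summands). Hence $S\twoheadrightarrow F_2$.

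Finally I would assemble the dichotomy: having disposed of the free abelian and $F_2$ cases, what remains is a single non-self loop $\Lambda$ with all edges of $\Z$ type, at most one $\Z^2$-type vertex, and all other vertices of $\Z$ type, which is the asserted conclusion. The one point demanding care is checking that contracting the $\Z$-type loop vertices keeps all inclusions maximal and keeps the loop non-self (guaranteed by retaining the two $\Z^2$ vertices), so that the method of Proposition \ref{sloop} can be invoked essentially verbatim.
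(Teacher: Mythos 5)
Your proposal is correct and follows essentially the same route as the paper: reduce via Theorem \ref{onel} to the single-loop case, rule out self loops by pushing the identification of edge endpoints from $S$ up to $G$, dispose of $\{id\}$-type edges in $\Lambda$ by splitting off a free factor of $\Z$ and applying Lemma \ref{hom}, and, when $\Lambda$ has two $\Z^2$-type vertices, contract the $\Z$-type loop vertices (using that maximality makes those edge inclusions isomorphisms) so that Proposition \ref{sloop} applies. Your explicit check that the contractions preserve maximality and that the hanging-tree edge inclusions remain direct summands (so Lemma \ref{hom} still applies) is a point the paper treats more briefly, but the argument is the same.
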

\begin{proof}
Let $S$ be any subgroup of $G$,
so that as mentioned it is
also a graph of groups $S(\Sigma)$ with vertex stabilisers equal
to $S\cap\Z^2$ and edge stabilisers equal to $S\cap\Z$. As before
we are
done unless it turns out that $\Sigma$ contains a single loop $\Lambda$.
Moreover as $\Gamma$ does not contain
any self loops, we conclude that $\Sigma$ does not either. This is because if
we consider the action of $G$ on its Bass - Serre tree $T$ with quotient
graph $\Gamma=G\backslash T$, an element in $S$ which identifies the
endpoints of an edge in $T$ will also lie in $G$, and $G$ acts without
edge inversions. Now $S(\Sigma)$ may have vertex groups which are not
$\Z^2$ and edge groups which are not $\Z$, but we still have maximal
inclusions of edge groups into vertex groups as before.

First suppose that there is an $\{id\}$ type edge lying in $\Lambda$. Then
removal of this edge from $\Sigma$ gives us a graph of groups where the
graph is now a tree, and this group will surject to $\Z$ by Lemma \ref{hom}
(unless every vertex group is trivial, in which case we have the trivial
group). Now on putting back the missing edge, we find that $S$ is isomorphic
to the free product of this group with $\Z$ because here the edge group is 
trivial. So in this case either $S$ surjects to $F_2$ or it is equal to
$\{id\}*\Z$. 

Now we are left with the case where the loop $\Lambda$
has no $\{id\}$ type edges (thus no $\{id\}$ type vertices),
though there could well be $\Z$ type vertices. However whenever this
occurs then at such a vertex all edge inclusions will equal the vertex group
by maximality. 
Moreover we are assuming that there are at least 2 vertices of $\Z^2$ type
in $\Lambda$. We now tidy up the loop $\Lambda$ in a similar fashion
to that in the proof of
Theorem \ref{onel}, so that by contracting edges
we remove all $\Z$ type vertices from $\Lambda$ to form $\Lambda'$,
which will not change the subgroup $S$. This leaves us with
two graphs of groups $L(\Lambda')$ and $S(\Sigma')$ 
for new graphs $\Lambda',\Sigma'$ but the same
underlying groups $L,S$. Now in $\Lambda'$
all vertices are of $\Z^2$ type, all edges of $\Z$ type and there is
a single loop that is not a self loop (because the two
vertices of $\Z^2$ type in $\Lambda$ remain as distinct
vertices in the new loop). This certainly  
allows us to apply Proposition \ref{sloop} to $L(\Lambda')$, but also to
$S(\Sigma')$ because it is easily seen that
the proof of Proposition \ref{sloop} does not require
any restriction on the types of vertices or edges that lie outside the
loop $\Lambda$.
\end{proof} 
  
Thus we are left with $S(\Sigma)$ having a single loop $\Lambda$
with at most one $\Z^2$ type vertex in $\Lambda$ and all
others of $\Z$ type. Although this is now our only remaining case, the
problem is that in general these will not satisfy the conclusions of
our proposition, even if $\Lambda$ is not a self loop. 
For instance let us suppose that the graph of groups
$S(\Sigma)$ is such that $\Sigma$ consists only of
the loop $\Lambda$, with the vertices labelled $v_0,v_1,\ldots ,v_{n-1},
v_n=v_0$ in order, and such that all of $v_1,\ldots ,v_{n-1}$
are type $\Z$ vertices. Thus on forming the underlying group $S$,
we can contract $\Lambda$ to a self loop at $v_0$, so if this is
a $\Z^2$ type vertex with vertex group $\langle x,y\rangle$ and neighbouring
edge groups $\langle x^ky^l\rangle$ towards $v_{n-1}$ and $\langle x\rangle$
towards $v_1$
say, we see that $S$ is of the form 
$\langle x,y,t|[x,y],tx^{\pm 1}t^{-1}=x^ky^l\rangle$
which will not surject to $F_2$ unless $l=0$. On the other
hand if $v_0$ is of type $\Z$
with vertex group $\langle x\rangle$ then we would have 
$S=\langle x,t|txt^{-1}=x^{\pm 1}\rangle$ which can be the Klein bottle
group. Although these examples could certainly occur as subgroups of
our original group $G$, the remainder of this section involves showing
that they do not appear in the finite index subgroup of $G$
that we will now take.

\begin{prop} \label{doub}
Suppose that $G(\Gamma)$ is a graph of groups 
with $\Z^2$ vertex groups and $\Z$ edge groups, with $G$ acting on the
Bass - Serre tree $T$ to form the finite graph $\Gamma=G\backslash T$ with
natural projection $\pi_G:T\rightarrow\Gamma$. 
Then if $b$ is the first
Betti number of $\Gamma$, there exists a subgroup $H$ of index $2^b$
in $G$, thus which is also a graph of groups $H(\Delta)$ with
 $\Z^2$ vertex groups and $\Z$ edge groups
for $\Delta=H\backslash T$ and natural projection $\pi_H:T\rightarrow\Delta$,
that gives rise to a finite cover $q:\Delta\rightarrow\Gamma$ of these graphs
where $\pi_G=q\pi_H$ and with the following property:\\
Suppose that we have a closed path $p$ in $\Delta$
and consider its image $q(p)$ in $\Gamma$ which is also closed.
Then in tracing out the path $q(p)$ we pass
through every (non oriented)
edge an even number of times. In particular there are
no self loops in $\Delta$.
\end{prop}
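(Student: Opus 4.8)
The plan is to take $H$ to be the kernel of a mod-$2$ homology homomorphism. Recall (as exploited in the proof of Theorem \ref{onel}) that there is a natural surjection $G\twoheadrightarrow\pi_1(\Gamma)$ onto the fundamental group of the underlying graph, obtained by killing every vertex group; composing with the mod-$2$ Hurewicz map yields a surjection $\chi_2\colon G\twoheadrightarrow H_1(\Gamma;\Z/2)\cong(\Z/2)^b$. I would set $H=\ker\chi_2$, so that $H$ is normal of index $2^b$ in $G$, matching the claimed index. The crucial feature is that $\chi_2$ annihilates every vertex group, hence also every edge group (each being a subgroup of a vertex group), so $H$ contains all of the vertex and edge stabilisers of the action on $T$.

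Because $H\cap G_v=G_v$ for every vertex $v$ of $T$, and likewise on edges, the stabilisers are unchanged on passing from $G$ to $H$. Consequently the graph-of-groups morphism $\Delta=H\backslash T\to\Gamma=G\backslash T$ induced by $H\leq G$ is locally bijective on the star of each vertex (the quotient of a star by $G_v$ equals its quotient by $H\cap G_v$), so it is an honest covering $q$ of the underlying graphs with $\pi_G=q\pi_H$, of degree $[G:H]=2^b$. Since the vertex stabilisers remain $\Z^2$ and the edge stabilisers remain $\Z$, the graph $\Delta$ is again a finite graph of groups of the required type, giving all the asserted structure.

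For the parity statement I would read a closed path $p$ in $\Delta$ through Bass--Serre theory: choosing trivial vertex-group elements along $p$ produces an element $h_p$ of $\pi_1(\Delta,\cdot)\cong H$ whose associated loop in $\Gamma$ is exactly $q(p)$, and by construction $\chi_2(h_p)$ equals the class of $q(p)$ in $H_1(\Gamma;\Z/2)$. As $h_p\in H=\ker\chi_2$, this class vanishes. For a graph one has $H_1(\Gamma;\Z/2)=Z_1(\Gamma;\Z/2)$, since there are no $2$-cells and so $B_1=0$, and the class of a closed path is precisely its vector of edge-crossing numbers reduced mod $2$; thus the class being zero says exactly that $q(p)$ meets every non-oriented edge an even number of times. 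A self loop $e'$ in $\Delta$ would give a closed path crossing the single edge $q(e')$ exactly once, an odd number, so no self loops can occur. The step needing the most care is the Bass--Serre bookkeeping here: one must confirm that $H$ normal and containing every vertex stabiliser genuinely identifies $\pi_1(\Delta)$ with $H$ inside $G$, makes the $\Gamma$-shadow of $h_p$ equal to $q(p)$ on the nose rather than merely up to homology, and upgrades the graph-of-groups morphism to a true covering; once these standard facts are secured, the homological conclusion, and with it the absence of self loops, is immediate.
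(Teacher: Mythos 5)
Your proposal is correct and follows essentially the same route as the paper: the same subgroup $H$ (the kernel of $G\twoheadrightarrow\pi_1(\Gamma)\twoheadrightarrow(\Z/2)^b$, which contains all vertex and edge stabilisers and hence yields a genuine covering $q:\Delta\rightarrow\Gamma$), and the same parity argument, which the paper phrases via the cycle space of $\Gamma$ where you phrase it via $H_1(\Gamma;\Z/2)$ --- these coincide for a graph. The Bass--Serre bookkeeping you flag is handled in the paper by observing that $h(H)$ and $q_*(\pi_1(\Delta))$ are both normal in $\pi_1(\Gamma)$ with quotient $(C_2)^b$, hence equal, and that reducing a non-reduced closed path does not change edge-crossing parities.
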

\begin{proof}
We have the natural homomorphism $h$ from $G$ to the fundamental group
$\pi_1(\Gamma)$ of the underlying graph, which is free of rank $b$,
formed by quotienting out by all vertex and edge stabilisers. Thus
there is also a natural homomorphism factoring through this from $G$
to $(C_2)^b$ and the kernel $H$ of this is an index $2^b$ subgroup of $G$
containing all vertex and edge stabilisers of $\Gamma$. Consequently
$H$ can also be considered as
the fundamental group of a graph of groups $H(\Delta)$ for 
$\Delta=H\backslash T$. Now as $H$ is normal in $G$ we have that the
action of $G$ on $T$ with quotient $\Gamma$ factors through $\Delta$
to give us a map $q:\Delta\rightarrow\Gamma$ such that $\pi_G=q\pi_H$.
Moreover the action of the group $G/H$ on $\Delta$ with quotient $\Gamma$
is without fixed points because $H$ contains all stabilisers, thus $q$
is a regular covering map. We then have, say by \cite{htch} 
Proposition 1.40 (c), that
$G/H$ is isomorphic to $\pi_1(\Gamma)/q(\pi_1(\Delta))$, therefore the
subgroups $h(H)$  and $q(\pi_1(\Delta))$ are both normal in $\pi_1(\Gamma)$
with quotient $(C_2)^b$ so must be equal.

Given a finite graph $\Gamma$,
the cycle space of $\Gamma$ is the vector subspace of the edge space
(functions from the edges of $\Gamma$ to $C_2$) which is the span of the
closed reduced paths in $\Gamma$. Now an element $\gamma\in\pi_1(\Gamma)$
can be thought of as first a concatenation of closed paths corresponding to
the generators of $\pi_1(\Gamma)$ which is then reduced. But this reduction
does not change the parity of the number of times an edge is passed through,
thus we obtain a function from $\pi_1(\Gamma)$ to the edge space with
image equal to the cycle space, which is isomorphic to $(C_2)^b$. Moreover
this is a homomorphism from $\pi_1(\Gamma)$ onto $(C_2)^b$
so its kernel must equal the characteristic subgroup $h(H)$. Thus 
although our path $p$ might not be reduced, it can still be thought of as
an element of $\pi_1(\Delta)$ where $q(p)$ lies in 
this kernel $h(H)$, without changing the parity of the number of edge visits.
Hence we have that
the edge set of $q(p)$ induces the zero map from the edges of $\Gamma$
to $C_2$, so every edge in $\Gamma$ has been passed through an
even number of times.
\end{proof}

We now have our finite index subgroup of $G$ in place, allowing us to
present our main result of this section. The proof is all about ensuring
there is a finite index subgroup which
avoids the examples given after Proposition \ref{nearm}.
\begin{thm} \label{main} 
Suppose that $G(\Gamma)$
is a finite graph of groups having $\Z^2$ vertex groups 
and $\Z$ edge groups and such that all inclusions
of edge groups in vertex groups are maximal. 
Then there is a finite index subgroup $H$ of $G$ such that for every
subgroup $S$ of $H$ we have that either $S$ surjects to $F_2$ or $S$
is free abelian (therefore isomorphic to $\{e\}$, $\Z$ or $\Z^2$).
\end{thm}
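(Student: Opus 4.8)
The plan is to take for $H$ precisely the finite index subgroup produced by Proposition \ref{doub}, so that $H=H(\Delta)$ is again a finite graph of groups with $\Z^2$ vertex groups, $\Z$ edge groups and maximal edge inclusions, but now with $\Delta$ having no self loops and enjoying the even crossing property: every closed path in $\Delta$ projects to a closed path in $\Gamma$ traversing each edge an even number of times. First I would check that this already forces the conclusion. Let $S\leq H$ be arbitrary and set $\Sigma=S\backslash T$. Since $S\leq H$ the graph $\Sigma$ covers $\Delta$, and a self loop in $\Sigma$ would project to one in $\Delta$, so $\Sigma$ has no self loops either. Thus Proposition \ref{nearm} applies verbatim to $H(\Delta)$ and its subgroup $S$: either $S$ surjects $F_2$, or $S$ is free abelian, or $\Sigma$ is a single loop $\Lambda$ that is not a self loop, with all edges of $\Z$ type, at most one vertex of $\Z^2$ type and all remaining vertices of $\Z$ type. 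It remains to eliminate this last case, where the examples following Proposition \ref{nearm} show that $S$ genuinely can fail the strongest Tits alternative in $G$.

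In this remaining case I would contract all the $\Z$ type vertices of $\Lambda$, exactly as in the proof of Theorem \ref{onel}, which does not change $S$ and collapses $\Lambda$ to a single stable letter $t$. Two situations arise. If every vertex of $\Lambda$ is of $\Z$ type then $S=\langle w,t\mid twt^{-1}=w^{\eta}\rangle$ for a sign $\eta=\pm1$, being $\Z^2$ when $\eta=+1$ and the Klein bottle group when $\eta=-1$; here $\eta$ is the holonomy obtained by multiplying the local sign at each contracted vertex, each such sign recording whether the two incident edge generators point the same way along the common $S$ line. If instead $\Lambda$ contains one vertex $v_0$ of $\Z^2$ type, with vertex group $\langle x,y\rangle$ and the two incident loop edges including as primitive vectors $\mathbf p$ and $\mathbf p'$, then $S=\langle x,y,t\mid[x,y],\,t\mathbf p\,t^{-1}=\mathbf p'\rangle$, which by the computation after Proposition \ref{nearm} surjects $F_2$ exactly when $\mathbf p$ and $\mathbf p'$ are parallel and otherwise fails. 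So everything reduces to showing, using the even crossing property of $H$, that $\eta=+1$ in the first situation and that $\mathbf p,\mathbf p'$ are parallel in the second.

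This is where the promised parity argument enters, and it is the crux. I would fix once and for all a generator $c_e$ of each edge group of $\Gamma$, giving a well defined primitive inclusion vector in each $\Z^2$ vertex group. Running once around the loop and bookkeeping these vectors, the holonomy sign telescopes: writing $\tau(\vec e)$ and $h(\vec e)$ for the signs with which $c_e$ meets the relevant $S$ line at the tail and head of a traversed edge, the local vertex signs recombine to give $\eta=\prod_{e}\sigma(e)^{\,n_e}$, where $\sigma(e)=\tau(\vec e)h(\vec e)$ is independent of the direction of traversal and $n_e$ is the number of times the loop crosses the $\Gamma$ edge $e$. The point of Proposition \ref{doub} is now exactly that each $n_e$ is even, forcing $\eta=+1$, so $S\cong\Z^2$. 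For the vertex of $\Z^2$ type I would argue analogously: evenness means each of the two loop edges at $v_0$ is crossed again elsewhere in the loop, and each such further crossing occurs at a vertex which, in order to avoid a second $\Z^2$ type vertex (barred by Proposition \ref{nearm}), must be of $\Z$ type, whose $S$ line is therefore forced to coincide with the corresponding edge line; tracing these coincidences pins $\mathbf p$ and $\mathbf p'$ to the same line, whence $S$ surjects $F_2$.

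The main obstacle is making this bookkeeping rigorous: verifying that the holonomy really does assemble into a product of per edge contributions $\sigma(e)^{\,n_e}$, with $\sigma$ descending to $\Gamma$ edges, so that the even crossing property of $H$ trivialises it, and in the $\Z^2$ case that the coincidences forced by the extra even crossings genuinely yield $\mathbf p\parallel\mathbf p'$ rather than merely a sign. Everything else—the reduction to a single loop, the contraction of $\Z$ type vertices and the production of surjections to $F_2$—is already supplied by Theorem \ref{onel}, Propositions \ref{nearm} and \ref{sloop} and Lemma \ref{hom}; it is precisely the orientation and direction monodromy around the one surviving loop that the index $2^b$ subgroup of Proposition \ref{doub} is designed to kill.
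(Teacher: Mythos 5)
Your proposal follows the paper's proof essentially step for step: the same subgroup $H$ from Proposition \ref{doub}, the same reduction via Proposition \ref{nearm} to a single non--self loop with at most one $\Z^2$ type vertex, and the same two parity arguments (a sign/holonomy product over edges of $\Gamma$ trivialised by even crossing numbers in the all-$\Z$ case, and a crossing count at $\pi_G(t_0)$ forcing the two edge inclusions at the $\Z^2$ vertex to coincide, whence Proposition \ref{sloop} gives the surjection to $F_2$). The one step you flag as the remaining obstacle --- making the ``tracing of coincidences'' rigorous --- is precisely what the paper carries out by partitioning the edges of the lifted path whose images meet $\pi_G(t_0)$ into adjacent pairs with equal edge labels plus the two end edges $d_0,d_{n-1}$, and playing the resulting odd count off against the even fibre sizes of $\pi_G$.
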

\begin{proof}
We will assume that we have obtained
$H(\Delta)$ from $G(\Gamma)$ as in the statement of Proposition \ref{doub}.
Now suppose that $S$ is any subgroup of $H$, which
itself has finite index in $G$. We have that the action of $G$ on its
Bass - Serre tree $T$ with projection $\pi_G:T\rightarrow \Gamma=G\backslash T$
gives rise to the graph of groups $G(\Gamma)$, but
by restricting this action on $T$ to the subgroup $S$ we obtain the graph
of groups $S(\Sigma)$ for $\Sigma=S\backslash T$. Now by earlier results
in this section, we can assume that the quotient graph
$\Sigma$ (which could be infinite)
has a single loop $\Lambda$. Moreover this is not a self loop 
because there are none in $\Delta$, so by Proposition \ref{nearm} we are
done unless the vertices of $\Lambda$
are all of $\Z$ type apart from possibly one of $\Z^2$ type, and such that
the edges of $\Lambda$ are all of $\Z$ type, with maximal inclusions into
the vertex subgroups of $\Lambda$. Our method of proof is to
lift $\Lambda$ to a path $p$ in $T$ and then give a
parity argument utilising the fact that $S$ is a subgroup of $H$ in order to
get a contradiction.

We will proceed by labelling  
the vertices in $\Lambda$ consecutively as $v_0,v_1,\ldots ,v_n$ $=v_0$
with $v_0, v_1$ joined by
the edge $e_0$, and so on for edges $e_1$ up to $e_{n-1}$ which joins 
$v_{n-1}$ to $v_n=v_0$. We assume that if there is one vertex of $\Z^2$
type then it is $v_0$, with all others of $\Z$ type. 
Now as $\Sigma=S\backslash T$, we can take a vertex
$t_0\in T$ which lies above $v_0$ and which will have stabiliser in $S$
equal to
the group label on the vertex $v_0$ in the graph of groups $S(\Sigma)$
which we have obtained from the action of $S$ on $T$.
There will next be an edge $d_0$ in $T$ joining $t_0$ that
lies above $e_0$ and also with stabiliser in
$S$ equal to the label on its image in $\Sigma$, given as a subgroup of
the stabiliser in $S$ of the vertex $v_0$. 
We continue this process until 
we have found vertices $t_0,\ldots ,t_{n-1},t_n$ above $v_0,\ldots ,v_{n-1},
v_n=v_0$ and 
edges $d_0,\ldots ,d_{n-1}$
in $T$ above $e_0,\ldots ,e_{n-1}$ so that $d_i$ joins the vertices
$t_i$ and $t_{i+1}$. Note that although 
$t_n$ also lies above $v_0$ it is
not equal to $t_0$ because $T$ is a tree, 
though there will be an element $s\in S$ 
with $s(t_0)=t_n$ and so the stabiliser (in $S$ or in $G$)
of $t_n\in T$ will be the
conjugate by $s$ of the stabiliser (in $S$ or in $G$) of $t_0$.
We put an orientation on the straight line path from $t_0$ to $t_n$,
which we will call $p$,
so that the edge $d_i$ points from $t_i$ to $t_{i+1}$ and this induces
an orientation on the loop $\Lambda$. We also take another lift of the
edge $e_{n-1}$ which we call $d_{-1}$, this time 
joining $s^{-1}(t_{n-1})$ and $t_0$ and such that its stabiliser in $S$ is the
subgroup of the vertex stabiliser in $S$ of $t_0$ which is the edge label in
$S(\Sigma)$ on $e_{n-1}$ lying nearest to $v_0$. 

We next consider the image under $\pi_G$ of our path $p$.
As the element $s\in S$ sending $t_0$ to $t_n$
is also in $G$, we know
$\pi_G(p)$ is clearly a closed
path in $\Gamma$ but it need not be reduced. For instance
there can be backtracks in $\pi_G(p)$ when say $t_{i-1}$ and $t_{i+1}$ are
identified in $\Gamma$ by an element of the vertex stabiliser 
of $t_i$ in $G$ which does not lie in $S$. But as our subgroup $S$
lies in the finite index subgroup $H$ of $G$, we have that 
this path is the image
under the covering map $q$ of the closed path $\pi_H(p)$
in $\Delta$. Thus by Proposition \ref{doub} every (non oriented)
edge is passed through an even number of times when tracing out $\pi_G(p)$
in $\Gamma$.

Our purpose now is to note the relationship between the 
vertex and edge stabilisers of $G$ acting on $T$ and the vertex and edge
groups in the graph of groups $G(\Gamma)$ which we are thinking of as
labels appearing on the vertices and edges of $\Gamma$.
We only need this information for the path $p$, so to this
end  we define $V_i$ (where $0\leq i\leq n-1$) to be the group
$V_{\pi_G(t_i)}$ at the vertex $\pi_G(t_i)\in\Gamma$. Similarly given the edge
$d_j$ for $0\leq j\leq n-1$ with orientation as above, we set $E_j$ to be
the edge subgroup $E_{\pi_G(d_j)}$ of $V_{\pi_G(t_j)}$ which is the outgoing
edge label of $\pi_G(d_j)$ in $G(\Gamma)$ and $F_{j+1}$ 
equal to the edge subgroup $F_{\pi_G(d_j)}$ of $V_{\pi_G(t_{j+1})}$ by reading
the incoming edge label. In particular both $E_i$ and $F_i$ are subgroups of
$V_i$ for $0<i<n$ and $E_0,F_n$ are subgroups of $V_0$.

In order to form the fundamental group $G$ from the graph of groups
$G(\Gamma)$, we first
pick a maximal tree in $\Gamma$ and then associate the remaining
edges (once given an arbitrary orientation) to a free basis of the 
fundamental group $\pi_1(\Gamma)$.
Earlier we took a lift such that the initial vertex $t_0\in T$
actually has stabiliser equal to the label $V_0$ at the vertex
$\pi_G(t_0)\in\Gamma$.
Then we have that the vertex stabiliser of the point $t_i$, 
the edge stabiliser of the edge $d_{i-1}$ coming into $t_i$, and
the edge stabiliser of the edge $d_i$ leaving $t_i$ are all
simultaneously conjugate in $G$ to the labels $V_i,F_i,E_i$ respectively
of their images under $\pi_G$ in the graph of groups $G(\Gamma)$.
In particular we have that $E_i$ and $F_i$ are equal subgroups of $V_i$
if and only if the stabilisers $\mbox{Stab}_G(d_{i-1})$ and
$\mbox{Stab}_G(d_i)$ are equal subgroups of $\mbox{Stab}_G(t_i)$.

We now look more closely at the stabilisers of $G$ acting on $T$.
Thus let us set  
the vertex stabiliser in $G$ of the point $t_i\in T$ (where $0\leq i\leq n)$ 
to be
$\langle x_i,y_i\rangle$ and let the (infinite cyclic) edge stabiliser
in $G$ of the edge $d_j$ for $0\leq j\leq n-1$
be as follows: by changing bases if necessary
we can assume by maximality that it is equal to the
cyclic subgroup $\langle x_j\rangle$ of $\mbox{Stab}_G(t_j)$,
although this edge stabiliser is also a maximal cyclic subgroup of
the vertex stabiliser 
$\langle x_{j+1},y_{j+1}\rangle$ of $t_{j+1}$. 

Next we perform a similar comparison between the
vertex and edge labels on $S(\Sigma)$ and the stabilisers of $S$ acting
on $T$, at least for the loop $\Lambda$ in 
$\Sigma$. Of course the latter are found by simply intersecting
$S$ with the relevant vertex or edge stabiliser of $G$ acting on $T$. 
Thus the stabiliser in $S$ of $t_i$ for $0\leq i\leq n-1$
will equal $S\cap\langle x_i,y_i\rangle$, and we know these groups
are all infinite cyclic, except possibly for $v_0$ which could have $\Z^2$ 
type. Also the edge stabilisers $S\cap\langle x_j\rangle$ in $S$ of $d_j$ for
$0\leq j\leq n-1$ are all of $\Z$ type, thus
we will have 
$S\cap\langle x_j\rangle=\langle x_j^{a_j}
\rangle$ for some $a_j\neq 0$. 
However the stabiliser in $S$ of the edge $d_{j-1}$, which
is also of $\Z$ type,
must therefore be an infinite cyclic subgroup of the vertex 
stabiliser $S\cap\langle x_j,y_j\rangle$ and this latter
subgroup is equal to $\langle x_j^{a_j}\rangle$ when $1\leq j\leq n-1$
because these vertices are also of $\Z$ type.
This implies by maximality in $G(\Gamma)$ that $\langle x_j\rangle$ is 
equal to the incoming stabiliser in $G$ of $d_{j-1}$.
But as the incoming and outgoing stabiliser of the edge $d_j$ in $T$ are
the same group, we get
\[\langle x_0\rangle=
\langle x_1\rangle=\langle x_2\rangle=\ldots =\langle x_{n-1}\rangle,\]
thus by replacing elements with their inverse if necessary we have
$x_0=x_1=\ldots =x_{n-1}=x$ say
and $a_0=a_1=\ldots =a_{n-1}=a$ say. In particular
the element $x$
stabilises the geodesic line between $t_0$ and $t_n$ as it lies in all of the
relevant edge and vertex stabilisers.
However if the vertex $v_0$ is of $\Z^2$ type then
we need not have the incoming edge stabiliser of $d_{-1}$ in $S$
equal to the
outcoming edge stabiliser $S\cap \langle x\rangle=\langle x^a\rangle$ 
of $d_0$ at
the vertex $t_0\in T$ but instead 
the incoming edge stabiliser of $d_{-1}$
would equal $\langle x_0^ky_0^l\rangle$ in $G$ say,
and $\langle x_0^{mk}y_0^{ml}\rangle$ say in $S$.

At last we are ready to compare our path $p$ from $t_0$ to $t_n$
in $T$ with its image $\pi_G(p)$ in $\Gamma$. We noted earlier that
as we walk along the path $p$ or $\pi_G(p)$, 
a vertex having the property that the inclusions of the two edge groups
are equal in this vertex group
will display this behaviour both in $T$ and in $G(\Gamma)$.

Let us first consider the case where the vertex $v_0$ has $\Z^2$ type.
Here we know that the final incoming edge group $F_n$ in $G(\Gamma)$
and the initial outgoing edge group $E_0$ are both subgroups of the
vertex group $V_0$ but that they need not be equal.
However we have that the subgroups $F_i$ and $E_i$ are equal in $V_i$ for all
other vertices because of the correspondence with the stabilisers.
If in fact we do have $E_0=F_n$ then as the vertex $v_0$ has
$\Z^2$ type, the first case of the proof of Proposition \ref{sloop}
immediately applies here for $S(\Sigma)$, telling us that $S$ surjects $F_2$.
Otherwise we now define an equivalence relation on
the set $X$ of edges $d_i$ ($0\leq i\leq n-1$)
in our path $p$ for which the image $\pi_G(d_i)$ (here regarded as a
non oriented edge) has an endpoint equal to
$\pi_G(t_0)$ as follows: first suppose that there are no self loops
in $\pi_G(p)$. Then
for each edge $d_i$ in $X$
we let the subgroup $G_i$ of $V_0$ 
be whichever one of the edge subgroups $E_i$ or $F_{i+1}$ of $\pi_G(d_i)$
lies nearest to $\pi_G(t_0)$ (so if the pair of
edges $\pi_G(d_{i-1})$ and $\pi_G(d_i)$ both pass through $\pi_G(t_0)$ then
we will have $G_{i-1}=F_i$ and $G_i=E_i$ which are both subgroups of
$V_i=V_0$, whereas if it is the pair $\pi_G(d_i)$ and $\pi_G(d_{i+1})$
then now $G_i=F_{i+1}$ and $G_{i+1}=E_{i+1}$ which both lie in
$V_{i+1}=V_0$). Then if $d_i$ and $d_j$ are two edges in $X$, we say that
they are equivalent exactly when $G_i$ and $G_j$ are equal subgroups of
$V_0$. Now we know that $G_0=E_0$ and $G_{n-1}=F_n$ are not equal, so $d_0$ 
and $d_{n-1}$ must lie in
different equivalence classes. But every edge in $\Gamma$ 
is passed through an even number of times by Proposition \ref{doub}
and as $\pi_G(d_i)=
\pi_G(d_j)$ clearly implies that $G_i=G_j$ (even if the images of these
edges inherit opposite orientations from $p$) because we are reading the
same edge label from $G(\Gamma)$ in both cases, 
we must have that each equivalence class has even size.

However we will now look at the equivalence class $C$ of $d_0$ in a different
way by gradually building it up. Let us start by setting $C$ equal to
$\{d_0\}$ so that we have a set of odd size. Now the image of $d_0$ will
appear again as every edge in $\pi_G(p)$ is visited an even
number of times. Suppose then that we find $0<i\leq n-1$ with 
$\pi_G(d_i)=\pi_G(d_0)$. If $\pi_G(d_i)$ is travelling away from the vertex
 $\pi_G(t_0)$ then certainly $i\neq n-1$ because $\pi_G(d_{n-1})$ ends at
$\pi_G(t_0)$, and similarly $i\neq 1$ by considering $\pi_G(d_{i-1})$. In
this case we have $d_{i-1},d_i\in X$ with $G_{i-1}=F_i$ and $G_i=E_i$ and
we know that $E_i=F_i$ here. However we also have $G_i=G_0$ because 
$\pi_G(d_i)=\pi_G(d_0)$.

If however $\pi_G(d_i)$ travels towards $\pi_G(t_0)$ then we now have
$G_i=F_{i+1}$ but again $G_i=G_0=E_0$, therefore we cannot have $i=n-1$
because $G_{n-1}=F_n\neq E_0$. Thus $0<i<n-1$ and we also have $d_{i+1}\in X$
with $G_{i+1}=E_{i+1}=F_{i+1}=G_i=G_0$. Thus in either case we see that
two more edges $d_{i-1},d_i$ or $d_i,d_{i+1}$ are equivalent to $d_0$.
Consequently we now add them to $C$
but $|C|$ is still odd. Thus
some edge in $C$ must have its projection visited again
so this argument can be repeated, to find a new edge with its projection
having one end at $\pi_G(t_0)$. Moreover the edge immediately before or
after this edge that also projects to one with an endpoint at $\pi_G(t_0)$
will not yet have appeared in $C$ because at each stage we add to $C$
a pair of edges with image passing through  $\pi_G(t_0)$. Thus two more  
elements can now be added to $C$ and the argument can again be  
repeated until $C$ is of arbitrarily large but odd
size, which is a contradiction.

If there are self loops then the previous argument can now be made
to work as follows. If we obtain a self loop from $X$, namely the
edge $d_i$ is such that the initial and terminal endpoint of $\pi_G(d_i)$
are both equal to $\pi_G(t_0)$, then we split the edge $d_i$ into two
new edges $d_i,d_{i+1}$ (with the later edges renumbered accordingly).
We think of these as an outcoming edge $d_i$ where we set $G_i$ (in the new
numbering) equal to $E_i$ (in the old numbering) and $G_{i+1}$ (in the
new numbering) equal to $F_{i+1}$ (in the old numbering). Our parity
argument will now work as before.  

The remaining case is where all vertices and edges have $\Z$ type, so that
the fundamental group of the graph of groups obtained from the loop $\Lambda$ 
in $\Sigma$ will be isomorphic either to $\Z^2$ or to the Klein bottle
group and we will now eliminate the latter under the assumption that this
group lies in $H$. Consequently we have $E_i=F_i$ for $1\leq i\leq n-1$ 
as before, and now also $F_n=E_0$. The image in $\Gamma$ under $\pi_G$
of the path $p$  can be thought of as a connected subgraph $\Pi$ of
$\Gamma$. We have that all edge groups in $\Pi$ are infinite cyclic and
for the edge $d_i$ we know that the outgoing edge stabiliser label
$E_i$ in $G(\Gamma)$ is a subgroup of $V_i$ and the incoming label
$F_{i+1}$ is a subgroup of $V_{i+1}$ (with $F_n\leq V_0$). Thus for each
vertex $w\in\Pi$ and then for each edge $e$ in $\Pi$ that leaves
$w$, we have that the outward pointing edge stabiliser $E_e$ is an
infinite cyclic subgroup and we choose a preferred generator $g_e$ of
  $E_e$. However in our graph of groups $G(\Gamma)$ we do not just have
group labels $E_i$ and $F_{i+1}$ at each end of the edge $\pi_G(d_i)$, which
we have used up to now, but also a given isomorphism of the edge stabiliser
to each of the infinite cyclic subgroups $E_i$ and $F_{i+1}$, thus
providing a given isomorphism between them and hence between $E_i$
and $E_{i+1}=F_{i+1}$. We now label the edge $d_i$ with $+$ or $-$ as follows:
by looking at $\pi_G(d_i)$ in $\Pi$, we will find that the given isomorphism
from $E_i$ to $E_{i+1}$ either sends the preferred generator of $E_i$ to
that of $E_{i+1}$, in which case the edge $d_i$ is labelled $+$, or to
its inverse in which case $d_i$ is labelled $-$. In particular if
$\pi_G(d_i)=\pi_G(d_j)$ then (regardless of orientation) $d_i$ and $d_j$
have the same label, because these are chosen after applying $\pi_G$.

Thus on forming the subgroup $S$ from $S(\Sigma)$, we can first consider
the loop $\Lambda$. We have that $S$ conjugates the stabiliser
$\langle x^a\rangle$ in $S$ of the vertex $t_0$ to that of $t_n$
which is also equal to $\langle x^a\rangle$, but in order to work out whether
we have $sx^as^{-1}=x^a$ or $sx^as^{-1}=x^{-a}$, we follow the image path
$\pi_G(p)$ in $\Pi$, whereupon we need to keep track of how the element
$x$, which lies in all of $\mbox{Stab}_G(t_0),\ldots ,\mbox{Stab}_G(t_{n-1}),
  \mbox{Stab}_G(t_n)$, relates to these generators $g_i$. Now by the
description earlier of the relationship between stabilisers in $G$
and labels in $G(\Gamma)$, we can take $x=g_0$ where we are setting
$g_i$ equal to the preferred generator of $E_i$. Then as we walk along $p$
we have at each vertex $t_i$ that $x$ is conjugate in $G$ to one of
$g_i^{\pm 1}$, and we can keep track of the sign when
moving to $t_{i+1}$ by changing it 
if $\pi_G(d_i)$ is labelled with $-$, so that $g_i^{\pm 1}$ is
replaced by $g_{i+1}^{\mp 1}$ respectively,
but keeping the sign if $\pi_G(d_i)$ has a $+$ so that $g_i^{\pm 1}$
becomes $g_{i+1}^{\pm 1}$.
When we arrive at $t_n$ our
element will be $g_n$ or $g_n^{-1}$ according to the parity of the number
of negative edges we walked over in $\Pi$ and this element
$g_n^{\pm 1}$ is a conjugate (by $s$) of $x=g_0$. But $g_n=g_0$
because they are both the same preferred generator as $\pi_G(t_n)=\pi_G(t_0)$.
Thus we have $x^{\pm 1}=sxs^{-1}$ but as every edge is walked over
an even number of times, we conclude that $x^{\pm 1}$ is actually equal to $x$. 

Consequently in the graph of groups $S(\Sigma)$ we find that the loop
$\Lambda$ gives rise to the subgroup 
$\langle x,s|sx^as^{-1}=x^a\rangle\cong\Z^2$
of $S$. As for $S$ itself,
we are now left with trees emanating from here, so we will find
that either $S$ surjects to $F_2$ or is just $\Z^2$ by the
proof of Theorem \ref{onel} as if $\Sigma$ were a tree.
\end{proof}

\section{Other properties of tubular groups}
A group is said to be large if it has a finite index subgroup that
surjects $F_2$. Thus any group which virtually satisfies the strongest
Tits alternative and which is not virtually abelian, such as 
virtually special groups or the tubular
groups in the last section with maximal edge inclusions, is automatically
large. However it is not hard to see that actually all tubular groups
are large, including those containing non Euclidean or even non residually
   finite Baumslag - Solitar groups. Here we give a proof of this fact using
results from the last section which shows that tubular groups come
very close to always surjecting $F_2$. 
\begin{prop} \label{lrg}
If $G(\Gamma)$ is a tubular group and $\Gamma$ is not just a single vertex
(whereupon $G\cong \Z^2$) or a single vertex with a single self loop then
$G$ surjects $F_2$.

If $\Gamma$ is a single vertex with a single self loop, so that without
loss of generality we have
\[G=\langle x,y,t|[x,y],tx^it^{-1}=x^jy^k\rangle\]
for $i\neq 0$ and $j,k$ not both 0 then $G$ surjects $F_2$ if and only
if $k=0$. If not then $G$ has a subgroup of index 2 surjecting $F_2$.
\end{prop}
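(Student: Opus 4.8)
The plan is to produce honest surjections onto $F_2$ by constructing homomorphisms that kill the relevant edge elements, organising the argument by the first Betti number $b$ of $\Gamma$ and by the type of its single loop when $b=1$; the key quantitative input will be a dimension count on $H^1$ of tree tubular groups, which frees us from any maximality assumption on the edge inclusions.

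First I would record two building blocks. The \emph{splitting block}: if $G$ decomposes over a separating edge as $G=A\ast_{\langle c\rangle}B$ and there are surjections $h_A\colon A\twoheadrightarrow\Z$ and $h_B\colon B\twoheadrightarrow\Z$ with $h_A(c)=h_B(c)=0$, then sending $A$ into $\langle u\rangle$ by $h_A$ and $B$ into $\langle v\rangle$ by $h_B$ gives a well defined surjection $G\twoheadrightarrow F_2=\langle u,v\rangle$, since $c$ dies on both sides. The \emph{counting block}: for a tree tubular group $B$ on $n$ vertices one computes $\dim_\Q H^1(B;\Q)=n+1$ (its abelianisation is $\Q^{2n}$ modulo the $n-1$ edge relations, which are independent by the tree order). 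Hence requiring a homomorphism to $\Q$ to vanish on one, or on two, prescribed elements drops the dimension by at most one or two, leaving something nonzero as soon as $n\geq1$ or $n\geq2$; rescaling a nonzero $\Q$-valued homomorphism so its image is cyclic in $\Z$, exactly as in Corollary \ref{tree}, then yields the required surjection to $\Z$ killing the chosen elements.

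With these in hand the configurations of the first paragraph are dispatched by $b$ and loop type. If $b\geq2$ then $G$ surjects $\pi_1(\Gamma)=F_b$ and hence $F_2$. If $\Gamma$ is a tree with at least two vertices I would split on any edge and apply the counting block to each side. If $b=1$ with a self loop but more than one vertex, I would split off a subtree so the self loop lies on the $A$ side, there sending the stable letter to $1$ and every vertex group to $0$, and handling the complementary tree $B$ by the counting block. If $b=1$ with the loop not a self loop and at least one tree edge present, I would likewise split off a subtree and project the loop side onto $\pi_1(\cdot)=\Z$ of its underlying graph, killing $c$ because it kills all vertex groups. The only configuration admitting no separating edge is the pure cycle, and this is the crux: removing one loop edge presents $G$ as an HNN extension $G'\ast_\phi$ over a path tubular group $G'$ with $tc_1t^{-1}=c_2$, the counting block supplies a surjection $h\colon G'\twoheadrightarrow\Z$ with $h(c_1)=h(c_2)=0$ (the two conditions leave a space of dimension at least $n-1\geq1$), and then $t\mapsto v$ together with $h$ into $\langle u\rangle$ gives $G\twoheadrightarrow F_2$. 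This pure cycle case is the main obstacle in the first part, being precisely where the counting block cannot be replaced by a splitting.

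For the second paragraph, when $k=0$ the relation is $tx^it^{-1}=x^j$ and the assignment $x\mapsto 1$, $y\mapsto u$, $t\mapsto v$ is a surjection to $F_2$. When $k\neq0$ the delicate point is to show that \emph{no} surjection exists: any $\phi\colon G\to F_2$ sends the abelian group $\langle x,y\rangle$ to a cyclic subgroup $\langle w\rangle$, say $\phi(x)=w^p$, $\phi(y)=w^q$, and the relation becomes $s\,w^{pi}\,s^{-1}=w^{pj+qk}$ with $s=\phi(t)$; in a free group a conjugate of $w^{pi}$ is a power of $w$ only if $pi=0$ or $s$ centralises $w$, and each route forces either $\phi(x)=\phi(y)=1$ (so $\phi$ factors through $\langle t\rangle$) or $\phi(G)$ cyclic, so $\phi$ is never onto $F_2$. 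Finally, for the index two claim I would take $H$ to be the kernel of the parity map $G\to\Z/2$ sending $t\mapsto1$ and $x,y\mapsto0$; restricting the action on the Bass--Serre tree shows $H\backslash T$ is the double cover of the one vertex, one self loop graph, namely a digon with $\Z^2$ vertex groups and $\Z$ edge groups, so the pure cycle argument above (counting block with $n=2$) gives $H\twoheadrightarrow F_2$.
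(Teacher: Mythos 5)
Your proposal is correct, and it reaches the same conclusions as the paper by a genuinely different mechanism. Where the paper handles the tree and single\mbox{-}loop configurations by quoting the first part of the proof of Theorem \ref{onel} and the explicit construction in Proposition \ref{sloop} (with the remark that non maximal edge inclusions can be absorbed by replacing each edge inclusion with the maximal cyclic subgroup containing it, and with homomorphisms to $\Z$ built outwards from a root vertex as in Lemma \ref{hom}), you replace all of this with a single quantitative input: $\dim_\Q \mathrm{Hom}(B,\Q)=n+1$ for a tree tubular group on $n$ vertices, so that one or two prescribed vanishing conditions can always be imposed. This buys you two things the paper gets by other means: it is insensitive to non maximal edge inclusions from the outset, and it lets you treat the pure cycle directly as an HNN extension over a path (two vanishing conditions, dimension $n-1\geq 1$), which is exactly the case Proposition \ref{sloop} exists to handle. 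Your non\mbox{-}surjectivity argument for $k\neq 0$ also differs: you argue inside $F_2$ via uniqueness of maximal roots and cyclic centralisers (together with the standard fact that $srs^{-1}=r^{-1}$ is impossible in a free group), whereas the paper factors the putative surjection through the two\mbox{-}generated group $\langle t,z\mid tz^{il}t^{-1}=z^{jl+km}\rangle$ and uses that $F_2$ is Hopfian and not a Baumslag--Solitar group. Finally, your index two subgroup is the same one the paper extracts from Proposition \ref{doub}, and your identification of its quotient graph as a digon (no self loops because the element identifying the two ends of a lifted loop edge maps to $1$ in $C_2$) is sound; the counting block with $n=2$ then closes the argument. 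The only points worth writing out in full in a polished version are the independence of the $n-1$ edge relations in the rational abelianisation (each new leaf relation involves the fresh generators nontrivially) and the no\mbox{-}inversion fact in free groups, both of which are routine.
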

\begin{proof}
We know by the previous section that we are done if $\Gamma$ has at least
two loops, or if $\Gamma$ is a tree by the first part of the proof of 
Theorem \ref{onel} which does not require maximal inclusions of edge groups
because $\Gamma$ is a finite tree. The same point holds using the proof
of Proposition \ref{sloop} if $\Gamma$ has a single loop which is not
a self loop, on replacing
any edge inclusion with the maximal cyclic subgroup of the relevant
vertex group which contains it.
Thus suppose that
\[G=\langle x,y,t|[x,y],tx^it^{-1}=x^jy^k\rangle\]
so that if $k=0$ then we can remove $x$ to surject $F_2$. But in any
homomorphism of $G$ onto $F_2$, we know that $x$ and $y$ have to map
into the same cyclic subgroup of $F_2$, so that this map would factor
through
\begin{eqnarray*}
&&\langle x,y,t,z|[x,y],tx^it^{-1}=x^jy^k,x=z^l,y=z^m\rangle
\mbox{ for some }l,m\mbox{ not both }0\\
&\cong&\langle t,z|tz^{il}t^{-1}=z^{jl+km}\rangle
\end{eqnarray*}
which is 2-generated but not isomorphic to $F_2$ unless $l=0$ and $k=0$.

However as $\Gamma$ is a self loop then, even though $G$ does not surject
$F_2$ in this case, we can make an immediate application of Proposition
\ref{doub} with $b=1$ to obtain the graph of groups $H(\Delta)$ where the
index 2 subgroup $H$ of $G$ surjects $F_2$ by the start of this proof,
because $\Delta$ is not just a single self loop.
\end{proof}

As for group theoretic properties which are held by all free by $\Z$
tubular groups, as opposed to all tubular groups, we also have residual
finiteness (which in fact holds for all free by $\Z$ groups). The other
property, also shared by virtually special groups, 
which we will show here is that of being virtually
biorderable. This will be a consequence of 
\cite{perrlf03} Corollary 2.2 and Theorem 2.6, which together imply that
if the free by $\Z$ group $G=F_n\rtimes_\alpha\Z$ is such that all eigenvalues
of the matrix given by the action of $\alpha$ on the abelianisation
$\Z^n$ 
are real and positive then $G$ is biorderable.
\begin{thm} \label{vbi}
If $G(\Gamma)$ is a tubular free by $\Z$ group then 
$G$ is virtually biorderable.
\end{thm}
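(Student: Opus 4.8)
The plan is to present $G$ as a free-by-$\Z$ mapping torus, determine the eigenvalues of the monodromy on the abelianised fibre, and then pass to a finite index subgroup on which the criterion quoted from \cite{perrlf03} applies. By Theorem \ref{fbyc} we have a homomorphism $\chi:G\to\Z$ which is non zero on every edge group, and $F=\ker\chi$ is free of finite rank $n$ (if $\Gamma$ is a single vertex then $G\cong\Z^2$ is already biorderable, so assume otherwise). Choosing $t\in G$ with $\chi(t)$ a generator of the image, I would write $G=F\rtimes_\alpha\Z$ where $\alpha$ is conjugation by $t$, and let $M\in GL(n,\Z)$ denote the induced action of $\alpha$ on $H_1(F;\Z)=\Z^n$. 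The whole argument then reduces to understanding the eigenvalues of $M$.

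The key structural observation is that $F$ acts on the Bass--Serre tree $T$ of $G(\Gamma)$ with cyclic vertex stabilisers and trivial edge stabilisers: since $\chi$ is injective on each $\Z$ edge group we have $F\cap E=\{id\}$, while $F\cap\Z^2=\ker(\chi|_{\Z^2})\cong\Z$ at each vertex. Hence $F$ is a free product of these infinite cyclic vertex stabilisers together with a free group coming from the loops of $F\backslash T$, and the element $t$, which normalises $F$, permutes the conjugacy classes of the cyclic vertex stabilisers up to inversion. Consequently a power $\alpha^r$ fixes each such conjugacy class while preserving orientation, and because $t$ preserves this free splitting of $F$ into rank one pieces the automorphism $\alpha$ cannot grow exponentially; equivalently $M$ has spectral radius one. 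Since $\det M=\pm1$ forces the product of the eigenvalue moduli to equal $1$, every eigenvalue of $M$ then lies on the unit circle, so by Kronecker's theorem (an algebraic integer all of whose conjugates have modulus one is a root of unity) every eigenvalue of $M$ is a root of unity.

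With this in hand the conclusion is immediate. Taking $k$ to be a common multiple of the orders of these roots of unity, the matrix $M^k$ is unipotent, so all of its eigenvalues equal $1$ and are in particular real and positive. The subgroup generated by $F$ and $t^k$ is $F\rtimes_{\alpha^k}\Z$, of index $k$ in $G$, and its monodromy on $H_1(F;\Z)$ is exactly $M^k$. Applying \cite{perrlf03} Corollary 2.2 and Theorem 2.6 to this subgroup shows it is biorderable, whence $G$ is virtually biorderable.

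The one substantive step, and the main obstacle, is the quasi-unipotence of $M$ established in the second paragraph: everything else is formal bookkeeping, whereas here one must genuinely use that the fibre $F$ splits as a free product of cyclic groups preserved by the monodromy in order to exclude an exponentially growing, hence off the unit circle, eigenvalue. A fully rigorous treatment of this point would control the growth of $\alpha$ via the induced map on the quotient graph $F\backslash T$ (showing that a suitable power acts as a product of Dehn twists, hence unipotently on homology), which is where I expect the real work to lie.
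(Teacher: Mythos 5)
Your proof takes a genuinely different route from the paper's. The paper works entirely with the deficiency~1 presentation: it computes the Alexander polynomial $\Delta_{G,\chi}$ by Fox calculus and shows by induction on the graph (first over a spanning tree, then adding the HNN letters via the presentation in Theorem \ref{fbyc}) that $\Delta_{G,\chi}$ is a product of cyclotomic polynomials, hence that the abelianised monodromy is quasi-unipotent; you instead try to read quasi-unipotence off the Bass--Serre structure of the fibre $F=\ker\chi$. Your structural observations are correct: $\chi$ is injective on each edge group and non-trivial on each vertex group, so $F$ acts on $T$ with trivial edge stabilisers and infinite cyclic vertex stabilisers, and since $FG_v=\chi^{-1}(\chi(G_v))$ has finite index in $G$ for every vertex group $G_v$, the quotient $Y=F\backslash T$ is a \emph{finite} graph, giving $F=P_1\ast\cdots\ast P_k\ast F_b$ with $P_i\cong\Z$ and $b=b_1(Y)$. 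The endgame (pass to $\langle F,t^k\rangle$ of index $k$, note $M^k$ is unipotent, apply Perron--Rolfsen) coincides with the paper's.

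The gap is exactly where you flag it, and the reasoning you offer there does not close it: ``$t$ preserves a free splitting into rank one pieces, hence $\alpha$ cannot grow exponentially, hence $M$ has spectral radius one'' is not an argument --- the factors may be moved by long conjugating elements, and in any case growth of $\alpha$ is not what you need; what you need is the induced map on homology. The fix is to bypass growth and Kronecker entirely. Let $V\leq H_1(F;\Z)$ be the subgroup generated by the images of the $P_i$; since $H_1(F)\cong\bigoplus_i H_1(P_i)\oplus H_1(Y)$, the classes $[p_i]$ form a basis of $V$ and $H_1(F)/V\cong H_1(Y)$. Conjugation by $t$ sends $\mathrm{Stab}_F(v)$ to $\mathrm{Stab}_F(tv)$, so $M$ preserves $V$ and acts there as a signed permutation matrix; and $t$ descends to an automorphism $\tau$ of the finite graph $Y$, so the induced map on $H_1(F)/V$ is $\tau_*$, which has finite order because $\mathrm{Aut}(Y)$ is finite. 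Thus $M$ is block triangular with both diagonal blocks of finite order, so every eigenvalue is a root of unity and some $M^k$ is unipotent. With that substitution your argument is complete and, to my mind, more conceptual than the paper's Fox-calculus computation, at the price of having to establish the finiteness of $F\backslash T$ and the invariant filtration of $H_1(F)$ explicitly.
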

\begin{proof} We assume that we have a homomorphism $\chi:G\rightarrow\Z$
as described in Theorem \ref{fbyc} which is non zero on all edge groups.
This gives rise to a particular decomposition of $G$ as 
$F_n\rtimes_\alpha\Z$ for $F_n$ the kernel of $\chi$ and the characteristic
polynomial of the abelianised matrix is also equal to the Alexander
polynomial $\Delta_{G,\chi}(t)$. This can be evaluated using the Fox
calculus as described in many places: see \cite{map} for a treatment
of groups with a deficiency 1 presentation, as we have here.

We first suppose that $\Gamma$ is a tree of $v$ vertices, giving rise
to the standard $2v$ generator, $2v-1$ relator presentation of $G$ which
we read off from the graph of groups. We next obtain, using the Fox
calculus, the $2v-1$ by $2v$ Alexander matrix $A$ with entries in the
ring of Laurent polynomials $\Z[t^{\pm 1}]$, where $t$ generates the
image of $\chi$. We then delete each column in turn and take determinants
to obtain the $2v$ minors, with the Alexander polynomial (defined up to
units) being the highest common factor of these minors. We will show
by induction on $v$ that if the first column is deleted then the
corresponding minor is (up to units) a product of cyclotomic polynomials,
thus so is $\Delta_{G,\chi}$.
For $v=2$ let us take a basis $\langle a_1,b_1\rangle$ for the first
vertex, where the inclusion into $\langle a_1,b_1\rangle$ of the single
edge group lies in $\langle a_1\rangle$, so that $\chi(a_1)=p\neq 0$ and we
can choose $b_1$ so that $\chi(b_1)=0$. Similarly for the second vertex,
giving us a basis $\langle a_2,b_2\rangle$ where $\chi(b_2)=0$ and 
$\chi(a_2)=q\neq 0$. Thus our three relators here will be $[a_1,b_1]$,
$a_1^ka_2^{-l}$ (where $k,l\neq 0$ and $kp=lq$), 
and $[a_2,b_2]$. On ordering the generators
$a_1,b_1,a_2,b_2$ and the relators as above, we obtain
the Alexander matrix
\[\sma {cccc}
0&t^p-1&0&0\\
\frac{t^{pk}-1}{t^p-1}&0&\frac{1-t^{ql}}{t^q-1}&0\\
0&0&0&t^q-1\fma\]
thus our result holds so far. Now suppose this is true for $2v$ vertices
and we add a leaf to $\Gamma$, thus introducing two new generators and
columns $a_{v+1},b_{v+1}$, where again we assume that the inclusion
into $\langle a_{v+1},b_{v+1}\rangle$ of the new edge subgroup lies
in $\langle a_{v+1}\rangle$ and 
$\chi(a_{v+1})=r\neq 0,\chi(b_{v+1})=0$. If this new vertex joins
the $d$th vertex then we will have the two new relators (and consequent rows) 
$a_d^ib_d^ja_{v+1}^{-s}$ and $[a_{v+1},b_{v+1}]$. But as our new generators
$a_{v+1},b_{v+1}$ did not appear in any of the previous relations, this means
we are adding two new columns to our Alexander matrix, with the first
that corresponds to $a_{v+1}$ having zeros in all but the penultimate row
where we have $(1-t^{rs})/(t^r-1)$, whereas the second new column 
corresponding to $b_{v+1}$ has all zeros except $t^r-1$ at the very bottom.
Thus, regardless of what else lies in our new rows, on crossing off the
first column and taking the determinant we obtain
$1-t^{rs}$ times the previous determinant so our induction is complete.

If $\Gamma$ is not a tree then we now add extra edges to a maximal tree
$T$ with graph of groups $H(T)$ for which the above applies. But by the
description in the proof of Theorem \ref{fbyc}, we can replace
$\chi:G\mapsto\Z$ by the related homomorphism $\chi_1:G\mapsto\Z$
where $\chi_1$ sends the stable letters to zero but agrees with $\chi$
on the vertex groups. Then on adding the first edge we see from the
presentation (\ref{eq1}) in Theorem \ref{fbyc} and the description of the
Alexander polynomial in terms of the characteristic polynomial of the
abelianised matrix that $\Delta_{E,\chi_1}$ is the product of the
characteristic polynomial $\Delta_{H,\chi}$ as found above and the
determinant of the matrix with 1s along the lower diagonal and in the
top right hand corner, which is $\pm (t^n-1)$.

We then repeat this for all remaining stable letters, resulting in $G$
being expressed as $F_N\rtimes_\alpha\Z$ where the natural homomorphism
to $\Z$ has Alexander polynomial whose zeros are all roots of unity. But
on taking the finite cyclic cover $H=F_N\rtimes_{\alpha^i}\Z$ of index $i$
in $G$, we have that here the abelianised matrix is the $i$th power of
that for $\alpha$, so for suitable $i$ the eigenvalues are all 1. Thus $H$ 
is biorderable by the results quoted just before this theorem.
\end{proof}

We finish this section with some properties that tubular groups do not
possess. They are obviously not word hyperbolic but also are never
relatively hyperbolic by \cite{me} Corollary 5.5. However Theorem 4.2
in that paper shows they are nearly always acylindrically hyperbolic: a
tubular group is not acylindrically hyperbolic exactly when for every
vertex we have that all edge inclusions into that vertex group lie
in a single cyclic subgroup. (Acylindrical hyperbolicity implies the
property of being SQ universal but this is weaker than largeness which
we already have for tubular groups.)

The final property we consider is that of being residually free. As a 
subgroup of a residually free group is also clearly residually free, we 
have that any group known to be residually free will also immediately
satisfy the strongest Tits alternative. Therefore we should confirm
that tubular groups cannot be residually free (or virtually residually
free) in order to ensure that the results in the last section do not
follow from this property. However there is one type of tubular group
that is obviously residually free, namely anything isomorphic to
$F_n\times\Z$ and there are various graphs of groups with this as the
fundamental group.
\begin{prop} If $G(\Gamma)$ is a tubular group then $G$ is residually
free if and only if $G$ is isomorphic to $F_n\times\Z$ for some
$n\in\N$. This occurs if and only if\\
(i) For every vertex we have that all edge inclusions
are equal to a single maximal cyclic subgroup of this $\Z^2$ vertex group
and\\
(ii) We can assign an orientation of each of these maximal cyclic 
subgroups, namely a choice of one of the two possible generators, such that 
on following this generator around the edge subgroups of any closed loop
in the graph $\Gamma$, we return to the original orientation. 
\end{prop}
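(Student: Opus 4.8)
The plan is to prove the chain of equivalences by showing (residually free) $\Rightarrow$ (isomorphic to $F_n \times \Z$) $\Rightarrow$ (conditions (i) and (ii)) $\Rightarrow$ (isomorphic to $F_n \times \Z$) $\Rightarrow$ (residually free), where the last implication is the easy direction since $F_n \times \Z$ is visibly residually free (it embeds in $F_n \times \Z$, itself residually free as a direct product of residually free groups). The first implication, that residual freeness forces the structure $F_n \times \Z$, is where the real content lies, and I expect it to be the main obstacle. The key leverage is that residually free groups are commutative transitive: if $a$ commutes with $b \neq 1$ and $b$ commutes with $c$, then $a$ commutes with $c$. This is a standard fact about residually free groups, inherited from free groups where the centraliser of any nontrivial element is cyclic, hence abelian.

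First I would use commutative transitivity to analyse the edge inclusions at each vertex. Suppose two edge groups inject into a single $\Z^2$ vertex group $\langle x,y\rangle$ as $\langle \mathbf{u}\rangle$ and $\langle \mathbf{w}\rangle$ with $\mathbf{u}, \mathbf{w}$ not generating a common cyclic subgroup. The two stable letters (or amalgamating identifications) associated to these edges produce elements that commute with $\mathbf{u}$ and with $\mathbf{w}$ respectively inside the neighbouring vertex groups. Chasing commutation relations through the vertex group, which is abelian so everything there commutes, and applying commutative transitivity across the edges, one derives a forced commutation that cannot hold in a residually free group unless the two edge subgroups actually coincide (up to the maximal cyclic subgroup containing them). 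This is how I would extract condition (i): residual freeness collapses all edge inclusions at a vertex into one maximal cyclic subgroup. I would also need to verify each edge inclusion is maximal, but a non-maximal inclusion $x = g^n$ with $n>1$ yields a subgroup surjecting the modular group $C_2 * C_3$ or a similar non-residually-free quotient (as in the example after Proposition \ref{nearm}), contradicting residual freeness.

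Next, granted condition (i), I would extract the orientation-compatibility condition (ii) and simultaneously build the isomorphism to $F_n \times \Z$. With all edge inclusions at each vertex lying in a common maximal cyclic subgroup $\langle z_v\rangle$, the central-looking element at each vertex threads through the whole graph of groups. The question is whether these local central directions can be globally coherently oriented: following a generator around a loop, either we return to the same generator (orientation preserved, giving a genuine central $\Z$ factor) or to its inverse (giving instead a Klein-bottle-type relation $tzt^{-1} = z^{-1}$, which is not residually free since the Klein bottle group is not residually free—its commutator quotient torsion obstructs it, or more directly it fails to be bi-orderable/residually free). So residual freeness forces the orientations to be consistent around every loop, which is exactly condition (ii). Under (i) and (ii) the common cyclic direction $z$ is central in all of $G$, and quotienting by $\langle z\rangle$ yields a graph of groups with $\Z$ vertex groups and trivial or $\Z$ edge groups that is free (the edge inclusions being the full vertex groups collapse the amalgamations, leaving a free group on the loops and tree structure); lifting back exhibits $G \cong F_n \times \langle z\rangle \cong F_n \times \Z$.

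The hardest part will be the commutative-transitivity argument that forces condition (i), because one must carefully track how stable letters and amalgamation identifications interact across several vertices and produce a concrete non-residually-free witness (such as a Baumslag--Solitar or modular subgroup) whenever two edge directions at a vertex disagree. I would handle this by reducing to the smallest offending configuration—two edges meeting at one vertex with incompatible inclusions—and exhibiting an explicit non-abelian, non-free, non-residually-free subgroup there, then propagating the conclusion to the whole graph. The orientation argument for (ii) is then a clean parity/monodromy computation around each independent loop, entirely analogous to the sign-tracking in the final case of the proof of Theorem \ref{main}.
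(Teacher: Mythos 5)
Your plan has a fatal flaw in the one direction that carries all the content. The ``key leverage'' you invoke --- that residually free groups are commutative transitive --- is false. Commutative transitivity is a property of \emph{fully} residually free groups (limit groups), not of residually free groups in general. Indeed the very group the proposition aims to produce, $F_n\times\Z$ for $n\geq 2$, is residually free but not commutative transitive: taking $a=(x,0)$, $b=(1,1)$ and $c=(y,0)$ in $F_2\times\Z$, we have $[a,b]=[b,c]=1$ but $[a,c]\neq 1$, since the centraliser of the central element $b$ is the whole non-abelian group. Consequently any argument that extracts condition (i) by ``deriving a forced commutation that cannot hold in a residually free group'' via commutative transitivity proves too much --- it would equally rule out $F_n\times\Z$ itself --- so the implication (residually free) $\Rightarrow$ (i) is not established by your method. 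A secondary but also genuine error: you claim that a subgroup surjecting the modular group $C_2*C_3$ contradicts residual freeness; it does not, since residual freeness passes to subgroups but not to quotients (e.g.\ $F_2$ surjects $C_2*C_3$). To rule out non-maximal edge inclusions you would instead need to show the relevant \emph{subgroup} is not residually free, for instance by noting that in a residually free group $b^n$ commuting with $y$ forces $b$ to commute with $y$ (an argument that uses residual freeness correctly, by pushing into a free quotient that preserves $[b,y]$).

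The paper takes a different route for this direction: it first reduces to the free by $\Z$ case (for trees, and then subtree by subtree) and invokes the classification, adapted from \cite{wlt} Theorem 2.9, of residually free free by $\Z$ groups as exactly the groups $F_n\times\Z$; it then uses the formula $Z(A*_CB)=C\cap Z(A)\cap Z(B)$ from \cite{mks} to translate the existence of a maximal cyclic centre into condition (i), and finally adds the remaining edges and self loops, where the requirement that the stable letter commute with (rather than invert) the central generator gives condition (ii). Your treatment of the remaining implications --- (i) and (ii) give a global central $\Z$ with free quotient, hence $G\cong F_n\times\Z$ since central extensions of free groups split, and $F_n\times\Z$ is residually free --- is fine and matches the spirit of the paper, but the core implication needs to be rebuilt on the Wilton-style classification (or some other correct use of residual freeness) rather than on commutative transitivity.
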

\begin{proof}
Here we only give a sketch. It can be shown without much problem, by mimicking
the approach of \cite{wlt} Theorem 2.9 for surface by $\Z$ groups, that
a free by $\Z$ group is residually free if and only if it is
isomorphic to $F_n\times\Z$ for some $n\in\N\cup \{0\}$. 
Thus first suppose that
we have a tubular group $G(\Gamma)$ where $G$ is residually free and 
$\Gamma$ is a tree, so that $G$ is free by $\Z$ from Corollary \ref{tree}
and hence isomorphic to $F_n\times\Z$. Moreover the same holds for any
subtree of $\Gamma$, from which we build up $G$ using amalgamated
free products. By \cite{mks} Corollary 4.5 we have that the centre
$Z(G)$ of $G=A*_CB$ is equal to $C\cap Z(A)\cap Z(B)$, so that by induction
on the number of edges $G$ has a
non trivial centre if and only if for every
vertex we have that all edge inclusions into that vertex group lie
in a single cyclic subgroup. However in $G\cong F_n\times\Z$ the centre
is a maximal cyclic subgroup, thus requiring that at every vertex
all edge inclusions equal the same maximal cyclic subgroup, as otherwise
we would have an element of a vertex group not lying in the centre of $G$
but a power of it would.

Next we assume that $\Gamma$ is a general finite graph but without self
loops. Then we can take different maximal trees of $\Gamma$, where every
edge will lie in one of these trees, and use the conclusion above in
combination with these various trees which will still force each edge
group at a vertex to equal the same maximal cyclic subgroup. We then
go back to our original maximal tree and add the remaining edges one by
one, whereupon we require the orientation condition in following the
edge subgroups around a loop, so that the relevant
stable letter commutes with
a generator of this edge group rather than conjugating it to its
inverse. Finally we can add on any self loops at a single vertex,
whereupon we still require the orientation condition.
\end{proof}
By \cite{asos} Lemma 3.9 we have that being acylindrically hyperbolic is a
commensurability invariant. Therefore suppose $G(\Gamma)$ is a tubular group
such that at some vertex in the graph the edge groups do not all lie in a
single cyclic subgroup of this vertex group. Then as mentioned $G$ is not
acylindrically hyperbolic, so it cannot be virtually residually free because
if so it is virtually $F_N\times\Z$ by the above and this
is certainly not acylindrically hyperbolic. Therefore the tubular groups
which virtually satisfy the strongest Tits alternative by consequence
of being virtually residually free are a very small collection amongst all
tubular groups.
 
\section{Examples of tubular groups}
In this final section we look at examples that have already appeared in the
literature and discuss their properties. Interestingly all graphs here
will consist of a single vertex with one or two self loops.
\hfill\\
{\bf Example 5.1}: The Burns, Karass, Solitar group\\
This group
$\langle a,b,t|[a,b],tat^{-1}=b\rangle$ was introduced in \cite{bks} as the
first 3-manifold group which was not subgroup separable (but then neither 
are many RAAGs).

However it is coherent and of cohomological and geometric dimension 2,
it does not contain non Euclidean Baumslag - Solitar subgroups, it is free
by $\Z$, it is acylindrically but not relatively hyperbolic, every non
trivial subgroup has a surjection to $\Z$, it is  biorderable, has the
unique product property, and has no zero divisors in its group algebra.
Further it acts freely on a CAT(0) cube complex; indeed freely and
cocompactly by \cite{wscb} Corollary 5.10. Moreover by \cite{liu} or
\cite{wsp} it is in fact virtually special and so has a finite index
subgroup satisfying the strongest Tits alternative (of which 2 is the
smallest such index by Theorem \ref{main} and Proposition \ref{lrg}). 
Being virtually special also implies that it is linear, even over $\Z$.
\hfill\\ 
{\bf Example 5.2}: The Gersten group\\
$G=\langle a,b,s,t|[a,b],sas^{-1}=a^{-1}b^2,tat^{-1}=b\rangle$ 
with two self loops contains the previous example and so is not
subgroup separable. However, in common with the Burns, Karass, Solitar group
above, it 
is coherent and of cohomological and geometric dimension 2,
it does not contain non Euclidean Baumslag - Solitar subgroups, it is free
by $\Z$, it is acylindrically but not relatively hyperbolic, every non
trivial subgroup has a surjection to $\Z$, it is  biorderable, has the
unique product property, and has no zero divisors in its group algebra.
However it was introduced in \cite{ger} as an example of a group which
does not act properly and cocompactly on a CAT(0) space. Also it is not
virtually special: we thank Mark Hagen for explaining this. If there was
a finite index subgroup $H$ of $G$ which was a subgroup of a RAAG (without
loss of generality finitely generated because $H$ is) then this RAAG acts
properly and cocompactly on the Salvetti complex, so $H$ acts properly on
a finite dimensional cube complex. Then we can use quasiconvex walls
obtained from $H$ 
to induce an action of $G$ on a CAT(0) cube complex, of higher but still
finite dimension. This will also be a proper action and finite dimensionality
tells us that $G$ acts properly and semisimply on this cube complex. However
\cite{bh} points out that Gersten's result extends to proper and
semisimple actions on a CAT(0) space, thus giving us a contradiction.
The Gersten group does act freely and hence properly on a CAT(0)
cube complex by \cite{wscb} but this will necessarily be infinite
dimensional.

Now by Theorem \ref{main} $G$ has a finite index subgroup (of index 4)
satisfying the strongest Tits alternative, yet we see that this property
cannnot be established for $G$ by a cubulation type argument. We believe
that $G$ is the best behaved example known, in terms of its abstract
group theoretic properties, that does not have a correspondingly
well behaved geometric action. However we do not know about its linearity,
over $\C$ or over $\Z$.
\hfill\\
{\bf Example 5.3}: The Woodhouse group\\  
In \cite{wd1} a criterion was given for when a tubular group acts freely
on a finite dimensional CAT(0) cube complex. We now look at Example 5.1
of that paper, which again comes from two self loops:
\[\langle a,b,s,t|[a,b],sabs^{-1}=a^2,tabt^{-1}=b^2\rangle\]
and which was shown there, despite being free by $\Z$ and so having a
free action on a CAT(0) cube complex by Theorem \ref{free}, to have no
such action on one which is finite dimensional.

As for its group theoretic properties, we again see that it
is coherent and of cohomological and geometric dimension 2,
it does not contain non Euclidean Baumslag - Solitar subgroups, it is free
by $\Z$, it is acylindrically but not relatively hyperbolic, every non
trivial subgroup has a surjection to $\Z$, it is left orderable, has the
unique product property, and has no zero divisors in its group algebra.
It does not have maximal edge inclusions but has a
cover of index 2 that does by Proposition \ref{fincv}. It is also virtually
biorderable and has a subgroup of index 8 satisfying the strongest Tits
alternative by Theorem \ref{main}. In the follow up paper \cite{wd2} it
is shown that for tubular groups, being virtually special is equivalent  
to acting freely on a finite dimensional CAT(0) cube complex, so like
the Gersten group this group does not virtually embed in a RAAG. Again we
do not know about its linearity.

We now consider some rather stranger examples.\\
\hfill\\
{\bf Example 5.4}: The Wise simple curve examples\\
Example 1.7 in \cite{wscb} introduced the following tubular groups:
\[G_n=\langle a,b,s,t|[a,b],sa^nbs^{-1}=ab,tab^nt^{-1}=ab\rangle\]
for $n\geq 2$ where all edge inclusions are maximal. Even so it was
shown there that they do not have equitable sets and so $G_n$ does
not act freely on any CAT(0) cube complex, thus it is not virtually
special. As in common with all tubular groups it 
is coherent and of cohomological and geometric dimension 2, is not
relatively hyperbolic,
it is locally indicable and hence left orderable, has the
unique product property, and has no zero divisors in its group algebra.
Moreover it does not contain non Euclidean Baumslag - Solitar subgroups,
and it is acylindrically hyperbolic.
However it is not free by $\Z$. Nevertheless, because all edge inclusions
are maximal, we can conclude that every non
trivial subgroup has a surjection to $\Z$ and that (by Theorem \ref{main})
there is a subgroup of index 4 that satisfies the strongest Tits
alternative. We do not know if $G_n$ is linear or virtually biorderable.
\hfill\\
{\bf Example 5.5}: The Wise non simple curve examples\\
Example 1.6 of \cite{wscb} was
\[ G_n=\langle a,b,s,t|sa^ns^{-1}=ab,tb^nt^{-1}=ab\rangle\]
so that $n=2$ is actually the Woodhouse group in Example 5.3. It was
shown in \cite{wscb} that $G_n$ does not act freely on a CAT(0) cube complex
if $n\geq 3$ and so we know it is not virtually special. Again we
conclude it is coherent and of cohomological and geometric dimension 2,
it does not contain non Euclidean Baumslag - Solitar subgroups,
it is acylindrically but not relatively hyperbolic,
it is locally indicable and hence left orderable, has the
unique product property, and has no zero divisors in its group algebra.
But as $G_n$ is not free by $\Z$
and the edge inclusions are not all maximal, we do not know if it
virtually satisfies the strongest Tits alternative.\\
\hfill\\
{\bf Example 5.6}: Wise's non Hopfian tubular group\\
The group
\[a,b,s,t|[a,b],sas^{-1}=a^2b^2,tbt^{-1}=a^2b^2\rangle\]
already appears in \cite{wsja} where it was shown to be non Hopfian, so
it is certainly not virtually special. Of course it is
coherent and of cohomological and geometric dimension 2,
it does not contain non Euclidean Baumslag - Solitar subgroups,
it is acylindrically but not relatively hyperbolic,
it is locally indicable and hence left orderable, has the
unique product property, and has no zero divisors in its group algebra.
Moreover \cite{wscb} shows that it acts freely on a CAT(0) cube complex,
though this cannot be finite dimensional by \cite{wd2}. However the edge
inclusions are not maximal and it is not free by $\Z$. Indeed as it is
not residually finite, it is not even virtually free by $\Z$.

\Address

\end{document}